\numberwithin{equation}{section}
\newtheorem{lemma}{Lemma}[section]
\newtheorem{theorem}[lemma]{Theorem}
\newtheorem{proposition}[lemma]{Proposition}
\newtheorem{definition}[lemma]{Definition}
\newtheorem{corollary}[lemma]{Corollary}
\newtheorem{example}[lemma]{Example}
\newtheorem{exercise}[lemma]{Exercise}
\newtheorem{remark}[lemma]{Remark}
\newtheorem{fig}[lemma]{Figure}
\newtheorem{tab}[lemma]{Table}
\newtheorem{assumption}[lemma]{Assumption}
\newcommand{\bth}{\begin{theorem}}
\newcommand{\ethe}{\end{theorem}}
\newcommand{\bre}{\begin{remark}\em }
\newcommand{\ere}{\end{remark}}
\newcommand{\ble}{\begin{lemma}}
\newcommand{\ele}{\end{lemma}}
\newcommand{\bde}{\begin{definition}}
\newcommand{\ede}{\end{definition}}
\newcommand{\bco}{\begin{corollary}}
\newcommand{\eco}{\end{corollary}}
\newcommand{\bpr}{\begin{proposition}}
\newcommand{\epr}{\end{proposition}}
\newcommand{\bexer}{\begin{exercise}}
\newcommand{\eexer}{\end{exercise}}
\newcommand{\bexam}{\begin{example}\rm  }
\newcommand{\eexam}{ \end{example}}
\newcommand{\bfi}{\begin{fig}}
\newcommand{\efi}{\end{fig}}
\newcommand{\btab}{\begin{tab}}
\newcommand{\etab}{\end{tab}}
\def\E{{\mathbb{E}}}
\def\P{{\mathbb{P}}}
\def\R{{\mathbb{R}}}
\def\N{{\mathbb{N}}}
\def\Z{{\mathbb{Z}}}
\def\S{{\mathbb{S}}}
\def\B_e{B_{\eta}(e)}
\renewcommand{\a}{\alpha }
\newcommand{\8}{\infty}
\newcommand{\ov}{\overline}
\newcommand{\wt}{\widetilde}
\definecolor{darkblue}{rgb}{0,0,1}
\definecolor{darkgreen}{rgb}{0,1,0}
\definecolor{darkred}{rgb}{1, 0,0}
\newcommand{\bfu}{{\bf u}}
\newcommand{\bfM}{{\bf M}}
\newcommand{\beao}{\begin{eqnarray*}}
\newcommand{\eeao}{\end{eqnarray*}\noindent}
\newcommand{\beam}{\begin{eqnarray}}
\newcommand{\eeam}{\end{eqnarray}\noindent}
\newcommand{\beqq}{\begin{equation}}
\newcommand{\eeqq}{\end{equation}\noindent}
\newcommand{\bce}{\begin{center}}
\newcommand{\ece}{\end{center}}
\newcommand{\barr}{\begin{array}}
\newcommand{\earr}{\end{array}}
\newcommand{\vague}{\stackrel{\lower0.2ex\hbox{$\scriptscriptstyle
                    \it{v} $}}{\rightarrow}}
\newcommand{\weak}{\stackrel{\lower0.2ex\hbox{$\scriptscriptstyle
                    \it{w} $}}{\rightarrow}}
\newcommand{\what}{\stackrel{\lower0.2ex\hbox{$\scriptscriptstyle
                    \it{\hat{w}} $}}{\rightarrow}}
\newcommand{\bdis}{\begin{displaymath}}
\newcommand{\edis}{\end{displaymath}\noindent}
\newcommand{\wh}{\widehat}
\newcommand{\bfx}{{\bf x}}
\newcommand{\bfX}{{\bf X}}
\newcommand{\bfB}{{\bf B}}
\newcommand{\bfA}{{\bf A}}
\newcommand{\bfa}{{\bf a}}
\newcommand{\bali}{\begin{align}}
\newcommand{\eali}{\end{align}}
\def\hsymb#1{\mbox{\strut\rlap{\smash{\Huge$#1$}}\quad}}
\begin{document}

\bibliographystyle{ecta}
\title{Characterization of the tail behavior of a class of BEKK processes: A stochastic recurrence equation approach}
\date{\today}

\author{Muneya Matsui\thanks{\noindent Department of Business Administration, Nanzan University, 18 Yamazato-cho Showa-ku Nagoya, 466-8673, Japan. {\tt mmuneya@nanzan-u.ac.jp}.  $^\dagger$Department of Economics, University of Copenhagen. Øster Farimagsgade 5, DK-1353 Copenhagen K, Denmark. {\tt rsp@econ.ku.dk} \newline  Matsui's research is partly supported by the JSPS Grant-in-Aid
 for Young Scientists B (16k16023). Pedersen is grateful for support from the Carlsberg Foundation.} \ and Rasmus S\o ndergaard Pedersen$^\dagger$}

%\author{ and Rasmus S\o ndergaard Pedersen${}^\ast$}
%\address{Department of Business Administration, Nanzan University,
%18 Yamazato-cho Showa-ku Nagoya, 466-8673, Japan}
%\email{mmuneya@nanzan-u.ac.jp}
%\author{Rasmus S\o ndergaard Pedersen${}^\ast$}
%\address{Department of Economics, University of Copenhagen. }
%\email{rsp@econ.ku.dk}
\maketitle
\begin{abstract}
We provide new, mild conditions for strict stationarity and ergodicity of a class of BEKK processes. By exploiting that the processes can be represented as  multivariate stochastic recurrence equations, we characterize the tail behavior of the associated stationary laws. Specifically, we show that the each component of the BEKK processes is regularly varying with some tail index. In general, the tail index differs along the components, which contrasts most of the existing literature on the tail behavior of multivariate GARCH processes. 
\vspace{2mm} \\
{\it Keywords:}\ Regular variation, GARCH, BEKK, stochastic recurrence equation.\\
 {\it JEL:}\ C32 and C58.
 \end{abstract}
%\keywords{Regular variation, bivariate \garch , Kesten's theorem, stochastic recurrence equation}
%\subjclass[2010]{Primary 60G70, 62M10, Secondary 60H25,91B84}
%\ams{}{} % insert the primary Maths Subject Classification number in the first bracket
         % and the secondary ams number(s) in the second bracket
         % e.g. \ams{60E20}{49G03;49F10}
% \thanks{Matsui's research is partly supported by the JSPS Grant-in-Aid
% for Young Scientists B (16k16023). Pedersen is grateful for support from the Carlsberg Foundation.
% }
\newpage

\section{Introduction}
In this paper we present novel results about the tail properties for the stationary solution to a class of multivariate conditionally heteroskedastic BEKK processes. Specifically, with $X_t\in \R^d$ we consider BEKK-ARCH (BEKK$(q,0,l)$) processes of the form
\begin{align}
 X_t &= H_t^{1/2}Z_t,\quad t\in \N,  \label{eq:BEKK1}\\
 H_t &= C+ \sum_{i=1}^{q}\sum_{j=1}^l A_{ij} X_{t-i}X_{t-i}' A_{ij}', \label{eq:BEKK2}
\end{align}
where $(Z_t:t\in \N)$ is i.i.d., $Z_t\sim N(0,I_d)$, with $I_d$ the $d \times d$ identity matrix, $C$ is a $d\times d$
positive definite matrix, $A_{ij}\in M(d,\R)$ (the set of $d\times d$ real matrices) for $i=1,...,q$ and $j=1,...,l$, and $X_0,...,X_{-(q-1)}\in \R^d$ are some initial values. This class of processes was originally introduced by Engle and Kroner (1995)\nocite{engle:kroner:1995}. By relying on results for stochastic recurrence equations (SREs), we find a new, mild condition for the existence of an almost surely unique stationary solution to the process in (\ref{eq:BEKK1})-(\ref{eq:BEKK2}). In the case where $l=q=1$ this stationarity condition is given explicitly in terms of the spectral radius of the matrix $A_{11}$, similar to the stationarity condition found by Nelson (1990)\nocite{nelson:1990} for one-dimensional ARCH processes. Next, again relying on results for SREs, we demonstrate that for various specifications of the matrices $A_{ij}$ and various values of $q$ and $l$ that each component (of the stationary solution) to (\ref{eq:BEKK1})-(\ref{eq:BEKK2}) is regularly varying with some index of regular variation, or tail index, $\alpha_i>0$, $i=1,..,d$. Importantly, we show that the tail indexes may in general be different, which contrasts most of the existing body literature on regularly varying solutions to multivariate GARCH processes, where the tail indexes are assumed to be the same along the components of $X$, see e.g. St{\u{a}}ric{\u{a}} (1999)\nocite{starica:1999} and Pedersen (2016)\nocite{pedersen:2016}. Cases of component-wise different tail indexes in the context of multivariate GARCH-type processes are considered in recent articles by Matsui and Mikosch (2016), for constant conditional correlation (CCC) GARCH processes, and Pedersen and Wintenberger (2018) for the process in (\ref{eq:BEKK1})-(\ref{eq:BEKK2}) with $q=l=1$ and $A_{11}$ diagonal (i.e. Diagonal BEKK-ARCH processes). The results in the present paper extend the theory in Pedersen and Wintenberger (2018) in several directions: for $q=1$ and $l\ge1$ we consider the component-wise tail behavior of $X_t$ for cases where the matrices $A_{11},\ldots,A_{1l}$ are simultaneous diagonalizable or simultaneous triangularizable. These cases include several interesting special cases such as triangular $A_{1j}$ and cases where $X_t$ stacks univariate ARCH(1) processes. For $q\ge 1$ we rely on recent results by Guivarc'h and Le Page (2016)\nocite{guivarch:lepage:2016} in order to characterize the tail behavior of $X_t$.  

In a vast amount of applications within quantitative economics and finance, it is well-documented that certain time series exhibit power law tails, see e.g. Loretan and Phillips (1994)\nocite{loretan:phillips:1994} and Gabaix (2009)\nocite{gabaix:2009}. Classic examples of such time series are the series of daily returns on publicly traded shares of stocks; Cont (2001)\nocite{cont:2001} and Ibragimov et al. (2015)\nocite{ibragimov:ibragimov:walden:2015}. In addition to exhibiting extreme values, such return series do typically exhibit conditional heteroskedasticity. The latter has led to an entire research area on univariate and multivariate GARCH models, and it is by now well-known that certain GARCH random variables are heavy tailed, see e.g. Davis and Mikosch (2009) for a discussion on regular variation on univariate GARCH variables and Pedersen and Wintenberger (2018) for references on heavy tailed multivariate GARCH variables. 
%In terms of multivariate GARCH-type variables, we emphasize that only few results exist for cases where the tail index differ between the components.

In addition to providing new results about the properties of a class of BEKK-ARCH processes in (\ref{eq:BEKK1})-(\ref{eq:BEKK2}), we conjecture that our results are important for obtaining a better understanding of the properties of the quasi-maximum likelihood (QML) estimators for the BEKK class of models. In particular, Avarucci et al$.$ (2013)\nocite{avarucci:beutner:zaffaroni2013} have shown that for a particular class of BEKK-ARCH models (with $q=l=1$), as considered in the present paper, the log-likelihood score contribution has a finite variance if and only if the second-order moments of $X_t$ are finite. Hence, standard arguments used to prove asymptotic normality of QML estimators rely on the assumption that $X_t$ has finite variances. Such condition may not necessarily be satisfied in practice. For instance, Ibragimov et al. (2015, Section 3.2) document that daily returns on certain emerging market foreign exchange rates may have tail index less than two, and hence infinite variance. Likewise, as argued in Pedersen and Rahbek (2014)\nocite{pedersen:rahbek:2014}, the much applied two-step covariance targeting estimator, that relies on computing the sample unconditional covariance matrix of $X_t$, does only seem to obey a Gaussian limiting distribution (at the usual $\sqrt{T}$-rate) provided that at least the fourth-order moments of $X_t$ are finite. In order to derive the limiting distributions of the aforementioned estimators in the case where the moment restrictions on $X_t$ are not satisfied, it appears essential to have results for the tail behavior of $X_t$, as done by Pedersen (2016)\nocite{pedersen:2016} who consider stable limit theory for the variance targeting estimator for multivariate constant conditional correlation (CCC) GARCH models.

The remainder of the paper is organized as follows. In Section \ref{sec:SRE} we state that BEKK-ARCH process can be represented as a stochastic recurrence equation, and we provide a new, mild condition for strict stationarity. In Section \ref{sec:results} we provide a brief overview of recent results on the tail behavior of BEKK-ARCH processes, and we outline our main contributions. Section \ref{sec:preliminaries} contains results on regularly varying random variables and one-dimensional SREs. In Sections \ref{sec:diagonalization} and \ref{sec:triangularization} we present results on tail behavior of BEKK-ARCH processes of order $q=1$ for the cases where the collection of matrices $\{A_{11},\ldots,A_{1l}\}$ is simultaneously diagonalizable and simultaneous triangularizable, respectively. In Section \ref{sec:ARCHQ} we present theory for BEKK-ARCH processes of arbitrary order $q\ge 1$. We provide concluding remarks in Section \ref{sec:conclusion}.

We end this section by providing some definitions and notation used throughout the paper. We let $M(n,\R)$ denote the space of $n \times n$ real matrices. For any column vector $x\in \R^n$ let $|x|$ denote any vector norm of $x$. For any real matrix $A$, let $\Vert A \Vert$ denote the operator norm $\Vert A \Vert = \sup_{x:|x|=1}|Ax|$. We let $\S^{n-1}$ denote the unit sphere in $\R^n$, i.e. $\S^{n-1} = \{x\in \R^n : |x|=1 \}$. For $x\in\R$, $x_+ = \max\{x,0\}$ and $x_- = \max\{-x,0\}$.

\section{The BEKK process as a stochastic recurrence equation}\label{sec:SRE}
In this section we state the stochastic recurrence equation (SRE) representation of the BEKK process in (\ref{eq:BEKK1})-(\ref{eq:BEKK2}). We use the SRE to state a mild condition for the existence of a stationary solution to the process. To the best of our knowledge, this result is new in terms of BEKK processes.

Consider the process in (\ref{eq:BEKK1})-(\ref{eq:BEKK2}). For
$i=1,...,q$ and $j=1,...,l$, let $(m_{i,j,t}:t\in \Z)$ be an
i.i.d. process with  $m_{i,j,t}$ univariate standard normal,
$m_{i,j,t}\sim N(0,1)$, and let $(m_{i,j,t}:t\in \Z)$ and
$(m_{r,s,t}:t\in \Z)$ be mutually independent for all $i\ne r$ and $j\ne
s$. Let $(B_t:t\in\Z) $ be an i.i.d. process with
$B_t\sim N(0,C)$ %with $(B_t:t\in\Z) $
 and mutually independent of $(m_{i,j,t}:t\in \Z)$ for all $i,j$. With $Y_t = (X_{t}',...,X_{t-(q-1)}')'$, noting that $Z_t$ is Gaussian, it holds that
  \begin{eqnarray}
  % \nonumber % Remove numbering (before each equation)
    V_t &=& M_t V_{t-1}+Q_t, \label{eq:SRE_V}
  \end{eqnarray}
  where
  \begin{eqnarray}
  % \nonumber % Remove numbering (before each equation)
    M_t &=& \left(
               \begin{array}{cccc}
                 M_{1,t} & M_{2,t} & \dots & M_{q,t} \\
                 I_d &  &  & 0 \\
                  & \ddots &  & \vdots \\
                  &  & I_d & 0 \\
               \end{array}
             \right), \label{eq:def_M}
  \end{eqnarray}
$Q_t = (B_t',0',\ldots'0')'$, $M_{i,t}=\sum_{j=1}^{l} m_{i,j,t}A_{ij}$
for $i=1,...,q$. In order to show that there exists a stationary solution to the BEKK process, we make the following assumption.
\begin{assumption} \label{ass:lyapunov}
  With $M_t$ defined in (\ref{eq:def_M}), let $\gamma$ denote the top Lyapunov exponent associated with the process in (\ref{eq:SRE_V}), i.e.
  \begin{eqnarray*}
  % \nonumber % Remove numbering (before each equation)
    \gamma &=& \inf_{n\in\N}n^{-1}\E [\log \Vert M_1\cdots M_n\Vert ].
  \end{eqnarray*}
  It holds that $\gamma < 0$.
\end{assumption}

Under Assumption \ref{ass:lyapunov}, and noting that $\E [(\log\Vert M_t\Vert)_+]<\infty$ and 
$\E [({\log | Q_t|})_+]<\infty$, we obtain the following result by an application of Theorem 4.1.4 of Buraczewski et al. (2016) (BDM henceforth)\nocite{buraczewski:damek:mikosch:2016}:
\begin{theorem}
  Suppose that Assumption \ref{ass:lyapunov} is satisfied. Then there exists an almost surely unique strictly stationary ergodic causal solution to the stochastic recurrence equation in (\ref{eq:SRE_V}). In particular, there exists a strictly stationary ergodic solution to the BEKK process in (\ref{eq:BEKK1})-(\ref{eq:BEKK2}).
\end{theorem}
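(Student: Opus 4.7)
The plan is to verify the hypotheses of Theorem 4.1.4 of BDM for the SRE (\ref{eq:SRE_V}) and then translate the resulting stationary ergodic solution back into a statement about the BEKK recursion (\ref{eq:BEKK1})--(\ref{eq:BEKK2}). That theorem requires (i) $(M_t,Q_t)_{t\in\Z}$ to be an i.i.d.\ sequence, (ii) $\E[(\log\|M_1\|)_+] + \E[(\log|Q_1|)_+] < \infty$, and (iii) the top Lyapunov exponent $\gamma$ of the matrix sequence $(M_t)$ to be strictly negative; in that case it delivers a unique strictly stationary ergodic causal solution expressible as the almost surely convergent series $V_t = Q_t + \sum_{n\ge 1} M_t M_{t-1}\cdots M_{t-n+1} Q_{t-n}$.

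Condition (i) follows immediately from the construction: by mutual independence of the Gaussian building blocks $(m_{i,j,t})_{i,j}$ and $B_t$ across $t$, the pair $(M_t,Q_t)$ depends only on time-$t$ randomness and is therefore i.i.d. For condition (ii), every entry of $M_t$ is a finite linear combination of standard normals with deterministic coefficients drawn from the matrices $A_{ij}$, so $\|M_t\|$ has finite moments of all orders; the elementary bound $(\log x)_+\le x$ for $x\ge 0$ then yields $\E[(\log\|M_t\|)_+]<\infty$. An identical argument applies to $|Q_t|=|B_t|$ using $B_t\sim N(0,C)$. Condition (iii) is exactly Assumption \ref{ass:lyapunov}. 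Applying BDM's Theorem 4.1.4 establishes the first statement of the theorem.

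It remains to extract a stationary BEKK solution from the stationary SRE solution $V_t=(X_t',X_{t-1}',\ldots,X_{t-(q-1)}')'$. The bottom $(q-1)d$ rows of (\ref{eq:SRE_V}) are tautologies enforcing the shift structure of $V_t$, while the top $d$ rows read
\begin{equation*}
X_t \;=\; \sum_{i=1}^q M_{i,t}\, X_{t-i} + B_t \;=\; \sum_{i=1}^q\sum_{j=1}^{l} m_{i,j,t}\, A_{ij}\, X_{t-i} + B_t .
\end{equation*}
Conditionally on $\mathcal{F}_{t-1}:=\sigma(V_s:s\le t-1)$, this right-hand side is a centred Gaussian vector, since $(m_{i,j,t})_{i,j}$ and $B_t$ are jointly Gaussian and independent of $\mathcal{F}_{t-1}$. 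Pairwise independence of the $m_{i,j,t}$ and independence of $B_t$ from the $m_{i,j,t}$ give conditional covariance $C+\sum_{i=1}^q\sum_{j=1}^{l} A_{ij} X_{t-i}X_{t-i}' A_{ij}' = H_t$. Consequently, on an (if necessary) enlarged probability space carrying an independent i.i.d.\ $N(0,I_d)$ sequence $(Z_t)$, the process defined by $X_t:=H_t^{1/2}Z_t$ has the same law as the projection of $V_t$ onto its first $d$ coordinates, and strict stationarity and ergodicity carry over from $V_t$.

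The main obstacle is conceptual rather than technical: one must be careful that the two specifications of the BEKK dynamics, namely the square-root form $X_t=H_t^{1/2}Z_t$ and the linear SRE form driven by $(m_{i,j,t},B_t)$, agree in law and not merely in conditional second moments. This equivalence hinges on the Gaussianity of the innovations, and is the reason the construction is tied to $Z_t\sim N(0,I_d)$; for non-Gaussian innovations one could still write down an analogous SRE but it would not reproduce the BEKK law. Once this identification is in place, the application of Theorem 4.1.4 of BDM is direct, because in the Gaussian setting the log-moment conditions are trivially satisfied and the only genuine assumption is the negativity of $\gamma$.
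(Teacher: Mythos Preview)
Your proof is correct and follows the same approach as the paper: verify the log-moment conditions $\E[(\log\|M_t\|)_+]<\infty$ and $\E[(\log|Q_t|)_+]<\infty$ (trivial by Gaussianity), then invoke Theorem~4.1.4 of BDM together with Assumption~\ref{ass:lyapunov}. The paper's proof is in fact just the one sentence preceding the theorem statement; you have spelled out the details and, in addition, made explicit the distributional equivalence between the SRE form and the square-root BEKK form, which the paper compresses into the phrase ``noting that $Z_t$ is Gaussian''.
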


\begin{remark}
\label{rem:statinarity}
  Note that for the case $d=q=l=1$ the BEKK process in
 (\ref{eq:BEKK1})-(\ref{eq:BEKK2}) is a univariate ARCH(1) process,
 i.e. $X_t = (C+A_{11}^2X_{t-1}^2)^{1/2}Z_t$ where $C>0$ and $A_{11}^2\ge 0$ are scalars, and $Z_t\sim i.i.d.N(0,1)$. Nelson (1990)\nocite{nelson:1990} showed that a necessary and sufficient condition for the existence of a
 stationary solution to such process is that $E[\log(A_{11}^2Z_t^2)]<0$,
 i.e. that $A_{11}^2 < \exp(-\psi(1)+{\log 2})=3.56...$, where $\psi$
 denotes the digamma function. As recently noticed by
 Pedersen and Wintenberger (2018)\nocite{pedersen:wintenberger:2018}, one can show that for the case $q=l=1$ a sufficient condition for the existence of a stationary solution to (\ref{eq:BEKK1})-(\ref{eq:BEKK2}) is that $\rho(A_{11}\otimes A_{11})< 3.56...$. In this case, the process in (\ref{eq:BEKK1})-(\ref{eq:BEKK2}) (which is a Markov chain for $q=1$) is geometrically ergodic.
\end{remark}
\begin{remark}
As noted by Nicholls and Quinn (1982, Corollary
 2.1.1)\nocite{nicholls:quinn:1982}, a sufficient condition for stationarity, stronger than Assumption \ref{ass:lyapunov}, is that  $\rho(\E[M_t \otimes M_t])<1$, where $\rho$ denotes the spectral radius. See also Francq and Zako\"ian (2010, Section 11.3)\nocite{francq:zakoian:2010} and Boussama et al. (2011)\nocite{boussama:fuchs:stelzer:2011} for sufficient conditions for stationarity of BEKK-GARCH processes.  
\end{remark}
\begin{remark}
The BEKK process in \eqref{eq:BEKK1}-\eqref{eq:BEKK2} could be extended
 by an autoregressive term such that $X_t = \Phi X_{t-1} + H_t^{1/2}Z_t$
 with $H_t$ given by \eqref{eq:BEKK2}. Such process, which one may
 denote a vector double autoregressive (DAR) process, has been studied
 by Nielsen and Rahbek (2014)\nocite{nielsen:rahbek:2014}, see also Ling
 and Li (2008)\nocite{ling:li:2008} and the references therein for
 details on one-dimensional DAR processes. The vector DAR process has an
 SRE representation of the form \eqref{eq:SRE_V}-\eqref{eq:def_M} with
 $M_{1,t}= \Phi +\sum_{j=1}^{l} m_{1,j,t}A_{1j}$.   Note that the
 process may have a strictly stationary solution even if the matrix
 $\Phi - I$ has reduced rank, in contrast to standard vector
 autoregressive processes of order one. In the remainder of this paper
 we focus on the BEKK processes of the form
 \eqref{eq:BEKK1}-\eqref{eq:BEKK2}, i.e. with $\Phi = 0$, but emphasize
 that the results in the following sections are straightforward to adapt
 to certain vector DAR processes. Hence, we note that certain vector DAR
 processes are indeed heavy-tailed, as conjectured by Nielsen and Rahbek
 (2014, Remarks 5 and 6). 
\end{remark}

Having shown that there exists a strictly stationary solution to the class of BEKK processes in (\ref{eq:BEKK1})-(\ref{eq:BEKK2}), we turn to characterizing the tail-properties of the associated stationary law of the processes. We start out by providing an overview of existing results as well as our new results.

\section{Existing results and our contributions}\label{sec:results}
Our objective is to consider the (component-wise) tail-behavior of $X_t$ given by various BEKK-ARCH processes of the form \eqref{eq:BEKK1}-\eqref{eq:BEKK2}. Recently, Pedersen and Wintenberger (2018)\nocite{pedersen:wintenberger:2018} considered the tail-behavior of $X_t$ for $q=1$ under the following conditions for $M_t$ defined in (\ref{eq:def_M}):

\begin{enumerate}[(a)]
  \item $M_t$ is invertible (almost surely) and has a positive Lebesgue density on $M(d,\R)$. 
  \item $M_t$ is a similarity (almost surely). Specifically, they consider the case where $l=1$ and $A_{11} = aO$ with $a$ a positive constant and $O$ an orthogonal matrix. This includes the well-known scalar BEKK process, by setting $O=I_d$.
  \item $l=1$ and $A_{11}$ is diagonal such that $M_t$ is diagonal. This is the well-known Diagonal BEKK process.
\end{enumerate}
For the first two types of processes, by relying on results due to
Alsmeyer and Mentemeier (2012) \nocite{alsmeyer:mentmeier2012} and
Buraczewski et al. (2009)\nocite{buraczewski:damek:guivarch2009},
respectively, they show that (under suitable conditions) $X_t$ is
multivariate regularly varying with each component having the same tail
index; we refer the reader to the monograph by Resnick
(2007)\nocite{resnick:2007} for more details on multivariate regular
variation. For the Diagonal BEKK process the tail indexes of the
components of $X_t$ differ whenever the diagonal elements of $A_{11}$
differ in modulus. In order to understand this property, we note that
for the diagonal case with $l=1$, with $\tilde A_{ii}$ denoting the $i$th diagonal element of $A_{11}$,
\begin{align} \label{eq:stacksre1}
  \left(
    \begin{array}{c}
      X_{1,t} \\
      \vdots \\
      X_{d,t} \\
    \end{array}
  \right)
  = & \left(
        \begin{array}{c}
          \tilde A_{11}m_{1,1,t}X_{1,t-1} + Q_{1,t} \\
          \ldots \\
          \tilde A_{dd}m_{1,1,t}X_{d,t-1}+ Q_{d,t} \\
        \end{array}
      \right).
\end{align}
Hence each component of $X_t$ obeys a one-dimensional SRE, and the component-wise tail indexes may be determined by Kesten-Goldie theory, see  Lemma \ref{ref:lemma:basic1} in the next section.

We consider the tail-behavior for larger classes of BEKK processes. In particular, we study in detail the following cases:
\begin{enumerate}[(1)]
  \item $q=1$, $l\ge 1$ and the matrices $A_{11},...,A_{1l}$ are simultaneously diagonalizable. This includes the important special case where $l=1$ and $A_{11}$ is full or triangular and diagonalizable. Another special case is when $X_t$ stacks $d$ (potentially independent) one-dimensional ARCH(1) processes.
  \item $q=1$, $l\ge1$ and the matrices $A_{11},...,A_{1l}$ are simultaneously triangularizable. This includes the special case where $l=2$ and $A_{11}$ and $A_{12}$ are triangular but not simultaneously diagonalizable.
  \item $q\ge 1$ and the distribution of the matrix $M_t$ satisfies certain irreducibility and contraction conditions in the spirit of Guivarc'h and Le Page (2016)\nocite{guivarch:lepage:2016}.
\end{enumerate}

 For case (1), considered in Section \ref{sec:diagonalization}, the strategy is to consider a suitable transformation of $X_t$. To fix ideas, in the case $l=1$, $PA_{11}P^{-1} =:D$ is diagonal, such that $Y_t = Dm_{1,t}Y_{t-1}+PQ_t$ with $Y_t=PX_t$. Here $Y_t$ is of the form \eqref{eq:stacksre1}, such that each $Y_{i,t}$ forms an SRE. We then use Lemma \ref{ref:lemma:basic1} to characterize the component-wise tail behavior of $Y_t$. This enables us to study the tail-behavior of $X_t=P^{-1}Y_t$, by carefully applying results for sums of regularly varying random variables (see Lemma \ref{lem:RVconvolution} in the next section).

 For case (2), considered in Section \ref{sec:triangularization}, suppose that $d=2$ and
 \begin{align} \label{eq:bivariatesre1}
   \left(
      \begin{array}{c}
        X_{1,t} \\
        X_{2,t} \\
      \end{array}
    \right)
    &= \left(
        \begin{array}{cc}
          M_{11,t} & M_{12,t} \\
          0 & M_{22,t} \\
        \end{array}
      \right)
    \left(
      \begin{array}{c}
        X_{1,t-1} \\
        X_{2,t-1} \\
      \end{array}
    \right)
    +
    \left(
      \begin{array}{c}
        Q_{1,t} \\
        Q_{2,t} \\
      \end{array}
    \right),
 \end{align}
with $M_{11,t},M_{12,t},M_{22,t}$ non-degenerate. In this case, we see that $X_{2,t}$ obeys an SRE and its tail behavior is obtained via Lemma \ref{ref:lemma:basic1} below. However, $X_{1,t}$ does not obey an SRE. In this case, depending on the properties of $M_{11,t}$, $X_{1,t}$ may inherit the tail shape of $X_{2,t}$ or it has fatter tails than $X_{2,t}$. The characterization of the tail behavior of $X_{1,t}$ is non-trivial and requires new technical arguments, extending the recent results by Damek et al$.$ (2019)\nocite{damek:matsui:swiatkowski:2017} who study the component-wise tail behavior of $\R_+^2$-valued SREs of the form \eqref{eq:bivariatesre1}.

For case (3), studied in Section \ref{sec:ARCHQ}, we show that, under suitable conditions, the BEKK-ARCH($q$) process satisfies some irreducibility and contraction conditions, recently considered by Guivarc'h and Le Page (2016)\nocite{guivarch:lepage:2016}. In particular, one may note that for $q>1$ the distribution of $M_t$ is singular with respect to the Lebesgue measure on $M(dq,\R)$, and hence the approach used in Pedersen and Wintenberger (2018) (see case (a) above) cannot be applied. 

In the next section, we provide a brief overview of results for one-dimensional regularly varying distributions and SREs.

\section{Preliminaries}\label{sec:preliminaries}
The following definitions and results can be found in the recent monograph of BDM \nocite{buraczewski:damek:mikosch:2016}. The results are essential for obtaining the results in the following sections.
For functions $f,g:\R\to\R$, $f(x)\sim g(x)$ means that $\lim_{x\to\infty}f(x)/g(x)\to 1$. A positive measurable function $f$ on $(0,\infty)$ is said to be regularly varying with index $\kappa\in\R$, if for any constant $c>0$, $\lim_{x\to\infty}f(cx)/f(x)=c^{\kappa}$. We say that an $\R$-valued random variable $X$ is regularly varying with index $\alpha \ge 0$ if the function $f(x) = \P(|X|>x)$ is regularly varying with index $-\alpha$ and there exist constants $p,q\ge0$ such that $p+q=1$ and
\begin{eqnarray}\label{eq:def:constants:RV}
% \nonumber % Remove numbering (before each equation)
  \lim_{x\to\infty}\frac{\P(X>x)}{\P(|X|>x)} = p \quad \text{and} \quad  \lim_{x\to\infty} \frac{\P(X\le -x)}{\P(|X|>x)} = q.
\end{eqnarray}
Note that if $X$ is regularly varying with index $\alpha >0$, then $\E[|X|^{\delta}]<\infty$ for any $0\le\delta<\alpha$, and $\E[|X|^{\tilde \delta}]=\infty$ for any $\tilde \delta > \alpha$. 

The following result is a generalization of Breiman's (1965)\nocite{breiman:1965} lemma, and is useful for characterizing the product of a regularly varying random variable and a lighter-tailed random variable.
\begin{lemma}
\label{lem:Breiman}
Let $X$ and $Y$ be independent random variables.
Assume that $X$ is regularly varying with index $\alpha>0$, and that there exists an $\varepsilon>0$ such that
 $\E|Y|^{\alpha+\varepsilon}<\infty$. Then $XY$ is regularly varying with index $\alpha$. In particular,
\begin{eqnarray}
 \lim_{x\to\infty}\frac{\P(XY>x)}{\P(|X|>x)} = p \E Y_+^\alpha +q \E
 Y_-^\alpha, \quad \text{and} \quad \lim_{x\to\infty}\frac{\P(XY<-x)}{\P(|X|>x)} = p \E Y_-^\alpha +q \E
 Y_+^\alpha, \label{breiman_eq}
\end{eqnarray}
where the constants $p$ and $q$ are given by \eqref{eq:def:constants:RV}.
\end{lemma}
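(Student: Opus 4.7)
The plan is to condition on $Y$ and reduce the problem to a dominated-convergence argument, with Potter's bounds providing the required envelope. I will only prove the first limit in \eqref{breiman_eq}; the second follows by replacing $Y$ with $-Y$ (note that $X$ and $-X$ have the same tail function).

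First, I would decompose by the sign of $Y$. Since $X$ and $Y$ are independent,
\begin{align*}
\frac{\P(XY>x)}{\P(|X|>x)}
 = \E\!\left[\frac{\P(X>x/Y\mid Y)}{\P(|X|>x)}\,\1_{\{Y>0\}}\right]
 + \E\!\left[\frac{\P(X\le x/Y\mid Y)}{\P(|X|>x)}\,\1_{\{Y<0\}}\right].
\end{align*}
Writing $y=|Y|$ in the second term, this becomes a sum of two expectations of the form $\E[g_x(Y)\,\1_{\{Y>0\}}]$ and $\E[h_x(|Y|)\,\1_{\{Y<0\}}]$. The aim is to pass to the limit inside the expectations.

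For the pointwise limits, fix $y>0$ and write $\P(X>x/y)/\P(|X|>x) = [\P(X>x/y)/\P(|X|>x/y)]\cdot[\P(|X|>x/y)/\P(|X|>x)]$. The first factor tends to $p$ by \eqref{eq:def:constants:RV}, while the second tends to $y^{\alpha}$ by regular variation of $\P(|X|>\cdot)$ with index $-\alpha$. Thus the integrand in the first expectation converges to $p\,y^{\alpha}\1_{\{y>0\}}$; similarly, the integrand in the second converges to $q\,|y|^{\alpha}\1_{\{y<0\}}$. Adding these gives the claimed limit $p\,\E Y_+^{\alpha}+q\,\E Y_-^{\alpha}$, provided exchange of limit and expectation is justified.

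The main obstacle is finding an integrable dominating function, and this is handled by Potter's bounds. Choose $\delta\in(0,\varepsilon)$ and fix $x_0$ so that for all $s\ge1$ and all $t\ge x_0$,
\[
\frac{\P(|X|>t/s)}{\P(|X|>t)}\le 2\,\max\!\bigl(s^{\alpha-\delta},s^{\alpha+\delta}\bigr)\le 2\bigl(1+s^{\alpha+\delta}\bigr),
\]
and analogously for $s\in(0,1)$ the ratio is bounded by $2(1+s^{\alpha-\delta})$, which is bounded in $s\in(0,1]$. To handle the region where $|Y|$ is so large that $x/|Y|<x_0$, I split at $|Y|\le x/x_0$ and $|Y|>x/x_0$; on the latter event the integrand is at most $\1/\P(|X|>x)$, but
\[
\frac{\P(|Y|>x/x_0)}{\P(|X|>x)}\le \frac{x_0^{\alpha+\varepsilon}\,\E|Y|^{\alpha+\varepsilon}}{x^{\alpha+\varepsilon}\,\P(|X|>x)}\longrightarrow 0
\]
as $x\to\infty$, since $x^{\varepsilon}\P(|X|>x)\to\infty$ by regular variation (slowly varying times $x^{\varepsilon}$). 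On the complementary event Potter's bound gives the integrable envelope $2(1+|Y|^{\alpha+\delta}+|Y|^{\alpha-\delta})$, which has finite expectation by the moment hypothesis on $Y$ (the term $|Y|^{\alpha-\delta}$ is integrable near $0$ trivially, and at infinity it is dominated by $|Y|^{\alpha+\varepsilon}$). Dominated convergence then delivers the claimed limits, and the statement that $XY$ is regularly varying with index $\alpha$ follows by combining the two one-sided asymptotics and noting that the sum $(p+q)\,\E|Y|^{\alpha}=\E|Y|^{\alpha}$ is finite and nonzero when $\P(Y\ne 0)>0$ (the degenerate case $Y\equiv 0$ being trivial). \halmos
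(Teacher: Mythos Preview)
Your argument is correct and self-contained: you condition on $Y$, identify the pointwise limit of the integrand via the definition of $p,q$ and regular variation of $\P(|X|>\cdot)$, and justify the exchange of limit and expectation by Potter's bounds plus a Markov-type tail estimate on the event $\{|Y|>x/x_0\}$. (One tiny slip: you write ``$x^{\varepsilon}\P(|X|>x)\to\infty$'' where you need $x^{\alpha+\varepsilon}\P(|X|>x)=L(x)x^{\varepsilon}\to\infty$; your parenthetical makes the intended statement clear.)

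The paper takes a shorter but less self-contained route. It decomposes via signs, observing that for $x>0$ the events $\{X_+Y_+>x\}$ and $\{X_-Y_->x\}$ are disjoint and exhaust $\{XY>x\}$, so that $\P(XY>x)=\P(X_+Y_+>x)+\P(X_-Y_->x)$; then it invokes the classical one-sided Breiman lemma (for nonnegative variables, as stated in BDM, Lemma~B.5.1) on each summand. The trade-off is that the paper's proof is two lines but relies on an external reference, whereas your approach essentially \emph{reproves} Breiman's lemma from scratch and therefore requires the Potter-bound machinery and the truncation at $x/x_0$. Both yield the same conclusion; the paper's decomposition is worth knowing because it cleanly separates the sign bookkeeping from the analytic core.
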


\begin{proof}
 Note that $X = X_+ - X_-$ and $Y = Y_+ - Y_-$. Hence for $x>0$ we have
 \begin{align}
  \P(XY>x) &=\P((X_+-X_-)(Y_+-Y_-)>x) = \P(X_+Y_+>x)+\P(X_-Y_->x), \label{partition}
  \end{align}
and the first part of (\ref{breiman_eq}) follows by an application of Breiman's lemma (c.f. Lemma B.5.1 in BDM \nocite{buraczewski:damek:mikosch:2016}) to each term in (\ref{partition}). The second part of (\ref{breiman_eq}) follows by a similar argument.
\end{proof}

The following result states that regular variation is closed under convolution. A proof is given in Section B.6 of BDM\nocite{buraczewski:damek:mikosch:2016}.
\begin{lemma}\label{lem:RVconvolution}
 Let $X$ and $Y$ be random variables, and assume that $X_+$ is regularly varying with index $\alpha > 0$ such that $\P(|Y|>x)=o(\P(|X|>x))$ as $x\to\infty$. Then
 \begin{align}
 	\P(X+Y > x)/\P(X > x) \to 1 \quad \text{as $x\to\infty$.}
 \end{align}
\end{lemma}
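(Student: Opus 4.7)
The plan is to run a standard sandwich argument: I would bound $\P(X+Y>x)$ above and below by $\P(X>(1\mp\eps)x)$ plus a term controlled by $\P(|Y|>\eps x)$, then pass to the limit $x\to\infty$, and finally let $\eps\downarrow 0$.

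First, for any $\eps\in(0,1)$, the set inclusions
\begin{align*}
\{X+Y>x\} &\subseteq \{X>(1-\eps)x\}\cup\{Y>\eps x\},\\
\{X+Y>x\} &\supseteq \{X>(1+\eps)x\}\cap\{Y>-\eps x\}
\end{align*}
give the two-sided estimate
\begin{align*}
\P(X>(1+\eps)x)-\P(|Y|>\eps x) \;\le\; \P(X+Y>x) \;\le\; \P(X>(1-\eps)x)+\P(|Y|>\eps x).
\end{align*}

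Next, I would divide through by $\P(X>x)$ and take $x\to\infty$. Since $X_+$ is regularly varying with index $\alpha$, the function $x\mapsto\P(X>x)$ is regularly varying with index $-\alpha$, hence
\begin{align*}
\lim_{x\to\infty}\frac{\P(X>(1\pm\eps)x)}{\P(X>x)}=(1\pm\eps)^{-\alpha}.
\end{align*}
The hypothesis $\P(|Y|>x)=o(\P(|X|>x))$, combined with the regular variation of $X$ (so that $\P(|X|>\eps x)$ is of the same order as $\eps^{-\alpha}\P(X>x)$), yields $\P(|Y|>\eps x)=o(\P(X>x))$ for each fixed $\eps>0$. Consequently, for every $\eps\in(0,1)$,
\begin{align*}
(1+\eps)^{-\alpha}\le\liminf_{x\to\infty}\frac{\P(X+Y>x)}{\P(X>x)}\le\limsup_{x\to\infty}\frac{\P(X+Y>x)}{\P(X>x)}\le(1-\eps)^{-\alpha},
\end{align*}
and letting $\eps\downarrow 0$ delivers the claim.

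The main obstacle I anticipate is a bookkeeping one: certifying that the stated hypothesis $\P(|Y|>x)=o(\P(|X|>x))$ is strong enough to produce $\P(|Y|>\eps x)=o(\P(X>x))$ in the step above. In the paper's setting, where the regular variation of $X$ is accompanied by a positive constant $p>0$ in \eqref{eq:def:constants:RV}, the tail functions $\P(|X|>x)$ and $\P(X>x)$ are of the same order and this reduction is routine; once it is granted, the rest is a textbook Potter-type sandwich argument.
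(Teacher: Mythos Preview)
The paper does not supply its own proof of this lemma; it simply refers the reader to Section~B.6 of BDM. Your sandwich argument is exactly the standard proof one finds there (and elsewhere), so in substance you have reproduced what the cited source does.

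Your closing paragraph already puts its finger on the only real issue: the lemma as written assumes regular variation of $X_+$ but phrases the negligibility condition in terms of $\P(|X|>x)$, and these are not automatically comparable. If $X_-$ had a strictly heavier tail than $X_+$, one could cook up a $Y$ with $\P(|Y|>x)=o(\P(|X|>x))$ yet $\P(Y>x)$ dominating $\P(X>x)$, and the conclusion would fail. You are right that in every application the paper makes of this lemma (Theorems~\ref{thm:simdiag} and~\ref{thm:simdiag2}), the variables $Y_{j,t}$ are symmetric with two-sided power-law tails, so $\P(|X|>x)$ and $\P(X>x)$ are of the same order and the reduction goes through. So your proof is complete for the paper's purposes, and your caveat is well placed.
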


Lastly, we state the following lemma about the strictly stationary solution to one-dimensional SREs. The first result on strict stationarity is given in Theorem 2.1.3 of BDM\nocite{buraczewski:damek:mikosch:2016}, but has been stated elsewhere in the literature under similar assumptions, see e.g. Bougerol and Picard (1992)\nocite{bougerol:picard:1992}. The second part on regular variation is given in Theorem 2.4.7 of BDM \nocite{buraczewski:damek:mikosch:2016} and was originally proved by Goldie (1991)\nocite{goldie:1991}.
\begin{lemma}
\label{ref:lemma:basic1}
Let $X_t$ be a $\R$-valued random variable satisfying the SRE
\begin{align}
	X_t = A_t X_{t-1} + B_t,\quad t\in\Z, \label{eq:SRE_onedim}
\end{align}
with $((A_t,B_t):t\in\Z)$ an $\R^2$-valued i.i.d$.$ sequence.

Suppose that $\P(A_t=0)=0$, $-\infty \le {\E[\log
 |A|]<0}$, and ${\E[(\log |B|)_+]<\infty}$. Then there exists an almost surely unique causal ergodic strictly stationary solution to the SRE in (\ref{eq:SRE_onedim}). Let $\P_0$ denote the distribution of the strictly stationary solution.

Suppose in addition that (1) $\P(A_t<0)>0$  and the conditional
 distribution of ${\log |A_t| }$ given ${A_t\ne 0}$ is non-arithmetic, (2)
 there exists an $\alpha > 0$ such that $\E[|A_t|^\alpha]=1$,
 $E[|B_t|^\alpha]<\infty$, and $\E[|A_t|^\alpha({\log |A_t|})_+]<\infty$, and (3) $\P(A_tx+B_t=x)<1$ for all $x\in\R$. 

Let $(A,B)$ have the same distribution as $(A_t,B_t)$. Then the stochastic fixed point equation 
\begin{align}
 X \overset{d}{=} AX + B
\end{align}
has a solution $X$ which is independent of $(A,B)$ and that has distribution $\P_0$. Moreover, there exists a constant $c_+>0$ such that
\begin{align}
 \P_0(X>x)\sim c_+x^{-\alpha} \quad \text{and} \quad \P_0(X<-x)\sim c_+x^{-\alpha},\quad \text{as $x\to \infty$},
\end{align}
where 
\begin{align}
c_+ = \frac{1}{2\alpha m_\alpha} \E[|AX+B|^\alpha-|AX|^\alpha] \quad
 \text{and} \quad m_\alpha = \E[|A|^\alpha{\log |A|}]>0.
\end{align}

\end{lemma}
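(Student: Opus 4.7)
The plan is to handle the existence/uniqueness assertion and the tail asymptotics separately, following the classical routes of Bougerol--Picard for the former and Goldie/Kesten for the latter.

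\textbf{Stationarity and uniqueness.} Iterating the SRE backwards suggests the candidate
\begin{equation*}
 X_t \;=\; B_t + \sum_{k=1}^{\infty}A_t A_{t-1}\cdots A_{t-k+1}\,B_{t-k}.
\end{equation*}
Under $\E[\log|A|]<0$, the strong law of large numbers gives $k^{-1}\log|A_tA_{t-1}\cdots A_{t-k+1}|\to \E[\log|A|]<0$ a.s., while $\E[(\log|B|)_+]<\infty$ together with Borel--Cantelli yields $k^{-1}(\log|B_{t-k}|)_+\to 0$ a.s. Consequently, the general summand is eventually bounded by $\e^{-\delta k}$ for some $\delta>0$ a.s., so the series converges absolutely. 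A direct substitution shows that the limit satisfies the SRE; it is strictly stationary and ergodic as a measurable functional of the i.i.d. driving sequence. For uniqueness, any two strictly stationary solutions $X_t, X_t^*$ obey $X_t-X_t^*=(A_tA_{t-1}\cdots A_{t-n})(X_{t-n-1}-X_{t-n-1}^*)$; the product tends to $0$ a.s., while the right-hand factor is tight by stationarity, forcing $X_t=X_t^*$ a.s.

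\textbf{Fixed-point equation.} Since $(A_t,B_t)$ is independent of $X_{t-1}\sim\P_0$, stationarity immediately gives $X\eqd AX+B$ with $X$ independent of $(A,B):=(A_t,B_t)$.

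\textbf{Tail asymptotics (Goldie's implicit renewal theorem).} Because $\E[|A|^\alpha]=1$, I would introduce the exponentially tilted probability $d\wt\P/d\P=|A|^\alpha$, under which $\log|A|$ is integrable with positive mean $m_\alpha$ and non-arithmetic by hypothesis (1). Using the fixed-point identity, for $r(u):=\e^{\alpha u}\P(X>\e^u)$ one derives a renewal equation
\begin{equation*}
 r(u) \;=\; (r\ast \wt F)(u)+g(u),
\end{equation*}
where $\wt F$ is the law of $\log|A|$ under $\wt\P$ and $g$ is an explicit forcing term built from $\E[|AX+B|^\alpha-|AX|^\alpha]$. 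Blackwell's key renewal theorem then yields $r(u)\to c_+$ with the stated expression for $c_+$; non-degeneracy (3) is precisely what rules out $c_+=0$. The symmetric left-tail asymptotic follows from the same argument applied to $\P(X<-\e^u)$, using $\P(A<0)>0$ from (1) to identify the limiting constant with $c_+$.

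\textbf{Main obstacle.} The technical heart is the passage to the renewal equation: $r$ does not satisfy one on the nose, so Goldie's cancellation device---comparing $\P(X>x)$ with $\P(AX>x)$ and absorbing the discrepancy into the forcing term---is indispensable. Verifying direct Riemann integrability of the resulting $g$ under only the moment conditions $\E[|B|^\alpha]<\infty$ and $\E[|A|^\alpha(\log|A|)_+]<\infty$ is delicate and constitutes the main analytic obstacle.
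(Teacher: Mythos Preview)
Your proposal is correct and follows precisely the classical routes the paper cites: the paper does not actually prove this lemma but refers to Theorem 2.1.3 of BDM (equivalently Bougerol--Picard) for the stationarity/uniqueness part and Theorem 2.4.7 of BDM / Goldie (1991) for the tail asymptotics, and your sketch is exactly the content of those results. Your identification of the direct Riemann integrability of the forcing term as the delicate point is accurate and is handled in Goldie's Lemma~9.4 under the stated moment conditions.
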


\section{Simultaneous diagonalization}\label{sec:diagonalization}
 We now consider the BEKK process in (\ref{eq:BEKK1})-(\ref{eq:BEKK2}) for $q=1$, which we denote the BEKK-ARCH(1) process. Specifically, for $t\in\Z$,
\begin{align}
 X_t &= H_t^{1/2}Z_t,\quad Z_t \sim i.i.d.N(0,I_d), \quad H_t = C+ \sum_{i=1}^l A_{i} X_{t-1}X_{t-1}' A_{i}', \label{eq:BEKKARCH1}
\end{align}
  and we note that the process has the SRE representation,
\begin{eqnarray}
    X_t &=& M_t X_{t-1}+Q_t, \quad M_t = \sum_{i=1}^{l} m_{i,t}A_{i},    \label{eq:SRE:BEKKARCH1}
  \end{eqnarray}
where $(m_{i,t}:t\in \Z)$ is an i.i.d$.$ process with $m_{i,t} \sim N(0,1)$, and  $(m_{i,t}:t\in \Z)$ and
$(m_{j,t}:t\in \Z)$ are mutually independent for all $i\ne j$. Moreover, $(Q_t:t\in\Z) $ is an i.i.d$.$ process with $Q_t\sim N(0,C)$
 and mutually independent of $(m_{i,t}:t\in \Z)$ for all $i$.

We consider BEKK-ARCH(1) processes satisfying that the collection
 $\{A_i:i=1,...,l\}$ is simultaneously diagonalizable, i.e$.$ the
 collection satisfies that there exists a real non-singular matrix $P$
 such that $D_i=PA_iP^{-1}$ is diagonal for $i=1,...,l$.\footnote{From
 Theorem 1.3.21 of Horn and Johnson (2013)\nocite{horn:johnson:2013} we
 have that
 {a set of diagonalizable matrices $\{A_i:i=1,...,l\}$, $l\ge 2$}, is simultaneously diagonalizable if and only if any pair of the set commutes.} Note that if $l=1$, we simply have that the matrix $A_1$ should be diagonalizable. We recall here that a sufficient, but indeed not necessary, condition for $A_1$ being diagonalizable is that all its eigenvalues are distinct. Noting that since the collection $\{A_i:i=1,...,l\}$ is simultaneously diagonalizable, we may, using \eqref{eq:SRE:BEKKARCH1}, define $Y_t=PX_t$ such that
\begin{align}\label{eq:SRE:Y:simdiag1}
  Y_t & =  \wt M_t Y_{t-1} + \wt Q_t, \quad \wt M_t =\sum_{j=1}^l m_{j,t}D_j, \quad \wt Q_t = P Q_t.
\end{align}
With $D_{ii,j}$ the $i$th diagonal element of $D_j$, we have that the $i$th component of $Y_t$, $Y_{i,t}$, can be written as an SRE,
\begin{align}\label{eq:SRE:Y:simdiag2}
  Y_{i,t} & = \left(\sum_{j=1}^l m_{j,t}D_{ii,j}\right)Y_{i,t-1} + \wt Q_{i,t},\quad i=1,...,d.
\end{align}
The idea is then to apply Lemma \ref{ref:lemma:basic1} to each component $Y_{i,t}$. Specifically, under certain conditions stated in Theorem \ref{thm:simdiag} below, there exist constants $c_{i,+}>0$ and $\alpha_i^{(Y)}>0$ such that $\P(Y_{i,t}>x)\sim c_{i,+}x^{-\alpha_i^{(Y)}}$ and $\P(Y_{i,t}<-x)\sim c_{i,+}x^{-\alpha_i^{(Y)}}$, i.e. $Y_{i,t}$ is regularly varying with tail index $\alpha_i^{(Y)}$. We have that $X_t=P^{-1}Y_t$, such that with $P^{ij}$ denoting element $(i,j)$ of $P^{-1}$, $X_{i,t}=\sum_{j=1}^{d}P^{ij}Y_{j,t}$. The tail index of $X_{i,t}$ is then obtained by careful investigation of the sum of the regularly varying variables $Y_{j,t}$.

We make the following assumptions that imply strict stationarity of the BEKK-ARCH(1) process and regular variation of $Y_{i,t}$.
\begin{assumption}\label{ass:simdiag}
	Let $(X_t:t\in \Z)$ be the BEKK-ARCH(1) process given in \eqref{eq:BEKKARCH1}. The collection $\{A_i:i=1,...,l\}$ is simultaneously diagonalizable, such that there exist a non-singular $P \in M(d,\R)$ and diagonal matrices $D_1,\ldots,D_l \in M(d,\R)$ such that $D_j = PA_jP^{-1}$ for $j=1,\ldots,l$. Let $D_{ii,j}$ denote the $i$th diagonal element of matrix $D_j$. For $i=1\ldots,d$ there exists $\alpha_i^{(Y)}>0$ such that $\E[|\sum_{j=1}^{l}D_{ii,j}m_{j,t}|^{\alpha_i^{(Y)}}]=1$.   
\end{assumption}
\begin{remark}
Noting that $m_{j,t}$ and $m_{i,t}$ are independent for $i\ne j$, we have that $\E[|\sum_{j=1}^{l}D_{ii,j}m_{j,t}|^{\alpha_i^{(Y)}}]=\E[|({\sum_{j=1}^{l}D_{ii,j}^2})^{1/2}z|^{\alpha_i^{(Y)}}]$ with $z$ a standard normal random variable. Hence it is straightforward to check if $\alpha_i^{(Y)}>0$ in Assumption \ref{ass:simdiag} exists.  	
\end{remark}
We obtain the following theorem. 

\begin{theorem} \label{thm:simdiag}
  Let $(X_t:t\in \Z)$ be the BEKK-ARCH(1) process given in \eqref{eq:BEKKARCH1}. Suppose that Assumption \ref{ass:simdiag} holds. Then the process has an almost surely unique strictly stationary and ergodic solution, $X_t = (X_{1,t},\ldots,X_{d,t})'$. Let $P^{ij}$ denote element $(i,j)$ of $P^{-1}$ and define the
 collection $\mathcal{A}_i = \{\alpha = \alpha_j^{(Y)} : j=1,...,d \
 \text{and} \ P^{ij} \ne 0  \}$. Suppose that $\alpha_i =
 \min\mathcal{A}_i$ has multiplicity one. Then $X_{i,t}$ is regularly varying with index $\alpha_i$.
\end{theorem}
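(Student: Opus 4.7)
My plan is to reduce the problem to one-dimensional SREs via the diagonalizing change of variable $Y_t=PX_t$, apply Kesten--Goldie theory (Lemma \ref{ref:lemma:basic1}) componentwise to $Y_t$, and then lift the conclusion back to $X_t=P^{-1}Y_t$ using the convolution closure of regular variation (Lemma \ref{lem:RVconvolution}). For stationarity, I would first observe that \eqref{eq:SRE:Y:simdiag1} has a diagonal multiplicative matrix $\wt M_t$, so the top Lyapunov exponent of that SRE equals $\max_i\E[\log|A_{i,t}^{(Y)}|]$, with $A_{i,t}^{(Y)}:=\sum_j m_{j,t}D_{ii,j}$. The map $\alpha\mapsto\log\E[|A_{i,t}^{(Y)}|^\alpha]$ is convex and vanishes at both $\alpha=0$ and $\alpha=\alpha_i^{(Y)}>0$; Assumption \ref{ass:simdiag} forces $\sum_j D_{ii,j}^2>0$ (else the moment would equal $0\ne 1$), so $A_{i,t}^{(Y)}$ is a non-degenerate centered Gaussian and the convex map is not identically zero, hence its right derivative at $0$, namely $\E[\log|A_{i,t}^{(Y)}|]$, is strictly negative. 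This verifies Assumption \ref{ass:lyapunov} for $Y_t$, yielding an a.s.\ unique stationary ergodic solution, and the same for $X_t=P^{-1}Y_t$ by linearity.

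Next I would apply Lemma \ref{ref:lemma:basic1} to each scalar SRE \eqref{eq:SRE:Y:simdiag2}. Since $A_{i,t}^{(Y)}$ and $\wt Q_{i,t}$ are independent centered Gaussians (the $m_{j,t}$'s are independent of $Q_t$), every hypothesis is routine: $\P(A_{i,t}^{(Y)}<0)=1/2$, $\log|A_{i,t}^{(Y)}|$ has a continuous and hence non-arithmetic distribution, $\E[|A_{i,t}^{(Y)}|^{\alpha_i^{(Y)}}]=1$ by assumption, the remaining Gaussian moment conditions hold trivially, and the degeneracy condition $\P(A_{i,t}^{(Y)}x+\wt Q_{i,t}=x)=0$ follows from continuity of $\wt Q_{i,t}$. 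The lemma then delivers the symmetric tails $\P(Y_{i,t}>x)\sim c_{i,+}x^{-\alpha_i^{(Y)}}$ and $\P(Y_{i,t}<-x)\sim c_{i,+}x^{-\alpha_i^{(Y)}}$ with a common constant $c_{i,+}>0$.

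Finally, for $X_{i,t}=\sum_{j=1}^d P^{ij}Y_{j,t}$, I would let $J_i=\{j:P^{ij}\ne 0\}$ and let $j_0\in J_i$ be the unique index with $\alpha_{j_0}^{(Y)}=\alpha_i$ supplied by the multiplicity-one hypothesis. Splitting $X_{i,t}=P^{ij_0}Y_{j_0,t}+R_t$ with $R_t=\sum_{j\in J_i\setminus\{j_0\}}P^{ij}Y_{j,t}$, the leading term is regularly varying with index $\alpha_i$ by a trivial rescaling. For the remainder, a union bound gives $\P(|R_t|>x)\le\sum_{j\in J_i\setminus\{j_0\}}\P(|P^{ij}Y_{j,t}|>x/|J_i|)$; since each summand is regularly varying with index $\alpha_j^{(Y)}>\alpha_i$, the whole bound is $o(x^{-\alpha_i})$, hence $\P(|R_t|>x)=o(\P(|P^{ij_0}Y_{j_0,t}|>x))$. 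Applying Lemma \ref{lem:RVconvolution} once to $X_{i,t}$ and once to $-X_{i,t}$ then yields both tail asymptotics, proving regular variation of $X_{i,t}$ with index $\alpha_i$.

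The main obstacle is concentrated in this third step. Lemma \ref{lem:RVconvolution} is stated for two summands, whereas $R_t$ combines several regularly varying variables that are \emph{dependent} through the shared innovations $m_{j,t}$ and $Q_t$. The dependence is actually harmless for the union-bound argument, which uses only marginals, but the multiplicity-one assumption is essential: it isolates a single dominant term, circumventing the possibility of cancellation among equally heavy-tailed summands, whose sum could a priori exhibit strictly lighter tails through destructive interference of tail constants.
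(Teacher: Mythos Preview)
Your proposal is correct and follows essentially the same route as the paper: diagonalize via $Y_t=PX_t$, apply Lemma \ref{ref:lemma:basic1} componentwise using the Gaussian structure, then recover the tail of $X_{i,t}$ by isolating the unique heaviest summand and applying Lemma \ref{lem:RVconvolution}. The only cosmetic differences are that the paper invokes Jensen's inequality and Gerencs\'er et al.\ (2008) for the Lyapunov exponent where you give the equivalent convexity argument, and the paper phrases the remainder step as ``repeated use of Lemma \ref{lem:RVconvolution}'' whereas you bound the remainder directly via a union bound before a single application---both are valid and your explicit remark that dependence among the $Y_{j,t}$ is irrelevant to the union bound is a useful clarification.
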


\begin{proof}
We start out by showing that $(X_t:t\in \Z)$ has an almost surely unique
 strictly stationary and ergodic solution. Since $X_t = P^{-1}Y_t$, with
 $Y_t$ given by \eqref{eq:SRE:Y:simdiag1}, it suffices to show that
 $(Y_t:t\in \Z)$ has an almost surely unique strictly stationary and
 ergodic solution. By Theorem 4.4.1 of BDM, this is the case if the top
 Lyapunov exponent $\gamma^{Y} := \inf_{n\in\N}n^{-1}\E [\log \Vert \wt
 M_1\cdots \wt M_n \Vert ] <0$. Let $\gamma_i :=
 \E[\log|\sum_{j=1}^{l}D_{ii,j}m_{j,t}|] $. By Theorem 1.1 of
 Gerencs\'er et al. (2008)\nocite{GGO}, $\gamma^{Y} =
 \max_{i=1,\ldots,d}\gamma_i$. By Jensen's inequality, we have that
 $\gamma_i<0$ for all $i =1,\ldots,d$, and we conclude that $(Y_t:t\in
 \Z)$ has an almost surely unique strictly stationary and ergodic solution.

Next, it is straightforward to show that each $Y_{i,t}$ given by \eqref{eq:SRE:Y:simdiag2} satisfies Lemma \ref{ref:lemma:basic1} under Assumption \ref{ass:simdiag}. Specifically, one may conclude that $Y_{i,t}$ is regularly varying with index $\alpha_i^{(Y)}>0$.  

It remains to characterize the tail behavior of $X_{i,t}=\sum_{j=1}^{d}P^{ij}Y_{j,t}$ for $i=1,...,d$. %Clearly, if $P^{ij}=0$ for some $j$, the tail behavior of $X_{i,t}$ will not depend on $Y_{j,t}$. 
Let $\mathcal{K}_i = \{j=1,...,d:P^{ij}\ne 0\}$ and define the
 collection of component-wise tail indexes of $Y_t$ that are relevant
 for $X_{i,t}$, $\mathcal{A}_i = \{\alpha = \alpha_j^{(Y)} : j\in
 \mathcal{K}_i \}$. When $\alpha_i := \min\mathcal{A}_i$ has
 multiplicity one, we may without loss of generality assume that 
 $\alpha_i=\alpha_1^{(Y)}$, i.e.  
 $\{1\} = \{j=1,...,d: P^{ij}\ne 0,\alpha_j^{(Y)}=\alpha_i
 \}$.
%$1\in \mathcal{G}_i:=\{j=1,...,d: P^{ij}\ne 0,\alpha_j^{(Y)}=\alpha_i
% \}$.
% such that $\alpha_i = \alpha_1^{(Y)}$. 
Then $X_{i,t}=P^{i1}Y_{1,t}+\sum_{j\in \mathcal{K}_i\setminus\{1\}}P^{ij}Y_{j,t}$. Using that each component of $Y_t$ has a symmetric distribution, and by repeated use of Lemma \ref{lem:RVconvolution}, we conclude that $P^{i1}Y_{1,t}$ has a lower tail index than $\sum_{j\in \mathcal{K}_i\setminus\{1\}}P^{ij}Y_{j,t}$ such that $\P(X_{i,t}>x)\sim c_ix^{-\alpha_i}$ and $\P(X_{i,t}<-x)\sim c_ix^{-\alpha_i}$ for some constant $c_i>0$.
\end{proof}

We next consider some applications of Theorem \ref{thm:simdiag}, where it is (implicitly) assumed that Assumption \ref{ass:simdiag} holds.
\begin{example}
Let $e_i$ denote a $d$-dimensional column vector satisfying, with
 $e_{j,i}$ denoting its $j$th entry, $e_{i,i}=1$ and $e_{j,i}=0$ for
 $j\ne i$. Consider the BEKK process where $q=1,$ $l=d$ and
 $A_i:=A_{1,i}=e_ie_i^\prime a_i$ for some non-zero constant $a_i$,
 $i=1,...,d$. Here $X_t$ stacks $d$
 univariate ARCH(1) processes, potentially correlated. (If the matrix $C$ is diagonal, then the processes are mutually independent.) We note that the matrix $A_i$ is diagonal, and hence that the collection $\{A_i:i=1,...,d\}$  is simultaneously diagonalizable, choosing $P=I_d$. The tail index, $\alpha_i$, of $X_{i,t}$ satisfies $\E[|a_im_{i,t}|^{\alpha_i}]=1$.
\end{example}

\begin{example}
\label{ex32}
  Suppose that $l=1$ and that $A_1$ has non-zero, in modulus distinct real eigenvalues, $D_{11},...,D_{dd}$. Then $A_1$ is diagonalizable, such that for some non-singular $P\in M(d,\R)$, $D_1=:{\rm diag}(D_{11},...,D_{dd}) = PA_1P^{-1}$. Then $\alpha_i^{(Y)}>0$ satisfies $\E[|D_{ii}m_{1,t}|^{\alpha_i^{(Y)}}]=1$. Since, the eigenvalues are distinct in modulus, we have that all $\alpha_i^{(Y)}$ are distinct. We conclude that $X_{i,t}$ has tail index $\alpha_i = \min\{\alpha=\alpha_j^{(Y)} : j=1,...,d \ \text{and} \ P^{ij} \ne 0  \}$. \\
  As a simple example, suppose that $l=1$ and $d=2$ with
  \begin{align*}
    A_1 & = \left(
               \begin{array}{cc}
                 a & c \\
                 0 & b \\
               \end{array}
             \right), \quad \text{$a,b\ne 0$ and $|a|\ne |b|$.}
  \end{align*}
 Then $D_1 = PA_1P^{-1}$, with
 \begin{align*}
   D_1  & =\left(
               \begin{array}{cc}
                 a & 0 \\
                 0 & b \\
               \end{array}
             \right), \quad
   P  = \left(
               \begin{array}{cc}
                 1 & \frac{c}{a-b} \\
                 0 & 1 \\
               \end{array}
             \right),
             \quad \text{and}\quad
   P^{-1} = \left(
               \begin{array}{cc}
                 1 & -\frac{c}{a-b} \\
                 0 & 1 \\
               \end{array}
             \right).
 \end{align*}
  Following Theorem \ref{thm:simdiag}, $\alpha_1^{(Y)}$ and $\alpha_2^{(Y)}$ satisfy respectively $\E[|am_{1,t}|^{\alpha_1^{(Y)}}]= \E[|bm_{1,t}|^{\alpha_2^{(Y)}}]=1$. We have that $X_{2,t}$ has tail index $\alpha_2^{(Y)}$, and $X_{1,t}$ has tail index $\alpha_1^{(Y)} \wedge \alpha_2^{(Y)}$.

\end{example}

\begin{example}
\label{ex33}
	Consider the case $l=d=2$ where,
	\begin{align*}
    A_1 & = \left(
               \begin{array}{cc}
                 a & b \\
                 b & a \\
               \end{array}
             \right) 
             \quad \text{and} \quad 
    A_2 = \left(
               \begin{array}{cc}
                c & 0 \\
                 0 & c \\
               \end{array}
             \right), 
             \quad \text{$a\ne b$}.
  \end{align*}
 We have that $A_1$ and $A_2$ are simultaneous diagonalizable such that $D_1 = PA_1P^{-1}$ and $D_2 = PA_2P^{-1}$  with 
 \begin{align*}
   D_1  & =\left(
               \begin{array}{cc}
                 a-b & 0 \\
                 0 & a+b \\
               \end{array}
             \right), \quad
   D_2   =\left(
               \begin{array}{cc}
                 c & 0 \\
                 0 & c \\
               \end{array}
             \right), \quad          
   P  = \left(
               \begin{array}{cc}
                 -\frac{1}{2} & \frac{1}{2} \\
                 \frac{1}{2} & \frac{1}{2} \\
               \end{array}
             \right),
             \quad \text{and}\quad
   P^{-1} = \left(
               \begin{array}{cc}
                 -1 & 1 \\
                 1 & 1 \\
               \end{array}
             \right).
 \end{align*}
 	With $z$ a standard normal random variable, it holds that $\E[|\sqrt{(a-b)^2+c^2}z|^{\alpha_1^{(Y)}}]= \E[|\sqrt{(a+b)^2+c^2}z|^{\alpha_2^{(Y)}}]=1$ where $\alpha_1^{(Y)}\ne \alpha_2^{(Y)}$, since $a \ne b$. In particular, $\alpha_1^{(Y)}<\alpha_2^{(Y)}$ ($\alpha_1^{(Y)}>\alpha_2^{(Y)}$) if $|a-b|>|a+b|$ ($|a-b|<|a+b|$). We conclude that $X_{1,t}$ and  $X_{2,t}$ have tail index $\alpha_1^{(Y)} \wedge \alpha_2^{(Y)}$.
\end{example}

\begin{example}
In contrast to the previous example, we may for $l=1$ have that $A_1$ has some non-distinct eigenvalues. Suppose that $d=3$, and that with $a,b,c\ne 0$ and $a\ne b$,
    \begin{align*}
    A_1 & = \left(
              \begin{array}{ccc}
                a & 0 & 0 \\
                0 & a & 0 \\
                0 & c & b \\
              \end{array}
            \right).
  \end{align*}
  Then $D_1 = PA_1P^{-1}$ with
  \begin{align*}
    D_1 & = \left(
             \begin{array}{ccc}
               a & 0 & 0 \\
               0 & a & 0 \\
               0 & 0 & b \\
             \end{array}
           \right),
           \quad
           P = \left(
                 \begin{array}{ccc}
                   0 & \frac{c}{a-b} & 0 \\
                   1 & 0 & 0 \\
                   0 &  -\frac{c}{a-b} & 1 \\
                 \end{array}
              \right),\quad \text{and} \quad
               P^{-1} = \left(
                 \begin{array}{ccc}
                  0 & 1 & 0 \\
                   \frac{a-b}{c} & 0 & 0 \\
                   1 & 0 & 1 \\
                 \end{array}
               \right).
             \end{align*}
   In this case, $\alpha_1^{Y}=\alpha_2^{Y}$, satisfying $\E[|am_{1,t}|^{\alpha_1^{Y}}]=1$ and $\alpha_3^{Y}$ satisfies $\E[|bm_{1,t}|^{\alpha_3^{Y}}]=1$. Due to the structure of $P^{-1}$, we have that $X_{1,t}$ and $X_{2,t}$ have tail index $\alpha_1^{Y}$, whereas $X_{3,t}$ has index $\alpha_1^{Y}\wedge \alpha_3^{Y}$, since $a \ne b$.

\end{example}
The above example motivates the following theorem.

\begin{theorem} \label{thm:simdiag2}
 Let $(X_t:t\in\Z)$ be a BEKK-ARCH(1) process given by \eqref{eq:BEKKARCH1} with $l=1$. Suppose that Assumption \ref{ass:simdiag} holds such that the process is strictly stationary. Let $D_1=PA_1P^{-1}$, and let $\mathcal{A}_i$ be defined as in Theorem
 \ref{thm:simdiag}. Moreover, let $\alpha_i = \min\mathcal{A}_i$, and let $\mathcal{G}_i=\{j=1,...,d: P^{ij}\ne 0,\alpha_j^{(Y)}=\alpha_i \}$. With $D_{jj}$ the $j$th diagonal element of $D$, suppose that $D_{jj}=D_{kk}$ for all $k,j\in \mathcal{G}_i$. Then $X_{i,t}$ is regularly varying with index $\alpha_i$.
\end{theorem}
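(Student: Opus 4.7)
The strategy is to refine the argument of Theorem \ref{thm:simdiag} by aggregating those components of $Y_t$ whose tail index equals the minimum $\alpha_i$. The extra hypothesis $D_{jj}=D_{kk}$ on $\mathcal G_i$ forces the one-dimensional SREs for the $Y_{j,t}$, $j\in \mathcal G_i$, to share a common multiplicative noise, so their linear combination with weights $P^{ij}$ again satisfies a one-dimensional SRE; its tail is then read off from Lemma \ref{ref:lemma:basic1}.

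\emph{Step 1: identifying the common multiplier.} Since $m_{1,t}\sim N(0,1)$, the equation $\E[|D_{jj}m_{1,t}|^{\alpha}]=|D_{jj}|^{\alpha}\E[|m_{1,t}|^{\alpha}]=1$ determines $\alpha$ uniquely from $|D_{jj}|$ and conversely. Hence $\alpha_j^{(Y)}=\alpha_i$ for all $j\in\mathcal G_i$ already forces $|D_{jj}|$ to be constant on $\mathcal G_i$, and the extra assumption pins down a common signed value $d^{\ast}\ne 0$ with $D_{jj}=d^{\ast}$ for every $j\in\mathcal G_i$.

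\emph{Step 2: aggregation into a scalar SRE.} Split
\begin{equation*}
X_{i,t}=W_t+R_t,\quad W_t:=\sum_{j\in\mathcal G_i}P^{ij}Y_{j,t},\quad R_t:=\sum_{j\in\mathcal K_i\setminus \mathcal G_i}P^{ij}Y_{j,t}.
\end{equation*}
Substituting $Y_{j,t}=d^{\ast}m_{1,t}Y_{j,t-1}+\widetilde Q_{j,t}$ for $j\in\mathcal G_i$ gives
\begin{equation*}
W_t=d^{\ast}m_{1,t}W_{t-1}+\widetilde B_t,\qquad \widetilde B_t:=\sum_{j\in\mathcal G_i}P^{ij}\widetilde Q_{j,t},
\end{equation*}
which is a one-dimensional SRE of the form \eqref{eq:SRE_onedim} with $A_t=d^{\ast}m_{1,t}$ and $B_t=\widetilde B_t$. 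Because $C$ is positive definite and $P$ non-singular, $\widetilde Q_t=PQ_t$ has positive-definite covariance; combined with $P^{ij}\ne 0$ for all $j\in\mathcal G_i$, this makes $\widetilde B_t$ a non-degenerate centered Gaussian.

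\emph{Step 3: applying Lemma \ref{ref:lemma:basic1} to $W_t$.} The multiplier $A_t=d^{\ast}m_{1,t}$ is symmetric and absolutely continuous, so $\P(A_t<0)=1/2$ and $\log|A_t|$ is non-arithmetic; $\E[\log|A_t|]<0$ follows from $\alpha_i>0$ and Jensen; $\E[|A_t|^{\alpha_i}]=1$ by construction of $\alpha_i$; $\E[|A_t|^{\alpha_i}(\log|A_t|)_+]<\infty$ and $\E[|\widetilde B_t|^{\alpha_i}]<\infty$ are immediate Gaussian moment bounds; and $\P(A_tx+\widetilde B_t=x)<1$ for every $x\in\R$ because $\widetilde B_t$ is absolutely continuous. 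Lemma \ref{ref:lemma:basic1} then yields a constant $c_+>0$ with
\begin{equation*}
\P(W_t>x)\sim c_+x^{-\alpha_i}\quad\text{and}\quad \P(W_t<-x)\sim c_+x^{-\alpha_i},\qquad x\to\infty.
\end{equation*}

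\emph{Step 4: handling the remainder and concluding.} By the definition of $\mathcal G_i$, every $j\in\mathcal K_i\setminus\mathcal G_i$ has $\alpha_j^{(Y)}>\alpha_i$, so each $P^{ij}Y_{j,t}$ satisfies $\P(|P^{ij}Y_{j,t}|>x)=o(x^{-\alpha_i})$. Summing finitely many such terms preserves the estimate, giving $\P(|R_t|>x)=o(\P(|W_t|>x))$. Applying Lemma \ref{lem:RVconvolution} to $X_{i,t}=W_t+R_t$ and, symmetrically, to $-X_{i,t}=-W_t-R_t$ yields $\P(X_{i,t}>x)\sim c_+x^{-\alpha_i}$ and $\P(X_{i,t}<-x)\sim c_+x^{-\alpha_i}$, which is the claimed regular variation with index $\alpha_i$.

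\emph{Main obstacle.} The only non-routine point is recognizing that the common-multiplier hypothesis $D_{jj}=D_{kk}$ on $\mathcal G_i$ is precisely what is needed to collapse the relevant part of the linear combination $\sum_j P^{ij}Y_{j,t}$ into a single scalar SRE; without this, the sum of regularly varying summands with tied indices is not amenable to Lemma \ref{lem:RVconvolution} and in principle the tails could cancel or reinforce. Once aggregation is achieved, everything reduces to the one-dimensional Kesten--Goldie framework already recorded in Lemma \ref{ref:lemma:basic1}.
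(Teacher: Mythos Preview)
Your proof is correct and follows essentially the same approach as the paper: decompose $X_{i,t}$ into the $\mathcal G_i$-part and its complement, observe that the common diagonal entry $d^\ast$ makes $W_t=\sum_{j\in\mathcal G_i}P^{ij}Y_{j,t}$ satisfy a one-dimensional SRE with multiplier $d^\ast m_{1,t}$, apply Lemma~\ref{ref:lemma:basic1} to $W_t$, and then use Lemma~\ref{lem:RVconvolution} to absorb the lighter-tailed remainder. Your write-up is in fact more thorough than the paper's, since you explicitly verify the hypotheses of Lemma~\ref{ref:lemma:basic1} (non-arithmeticity, non-degeneracy of $\widetilde B_t$, etc.) and you note the pleasant fact that equal $\alpha_j^{(Y)}$ already forces $|D_{jj}|$ to be constant on $\mathcal G_i$, so the extra hypothesis is really only about the sign.
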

\begin{proof}
  With $Y_t=PX_t$, we have that $Y_{i,t}$ has tail index
 $\alpha_i^{(Y)}>0$, satisfying $\E[|D_{ii}m_{1,t}|^{\alpha_i^{(Y)}}]=1$. Let $\mathcal{G}_i^\dagger = \{j=1,...,d:P^{ij}\ne 0\}\setminus \mathcal{G}_i$. It holds that $X_{i,t} = \sum_{j\in \mathcal{G}_i}P^{ij}Y_{j,t} + \sum_{j\in \mathcal{G}_i^\dagger}P^{ij}Y_{j,t}.$ With $j\in \mathcal{G}_i$ let $\lambda = D_{jj}$. Then $\sum_{j\in \mathcal{G}_i}P^{ij}Y_{j,t} = \lambda m_{1,t} \sum_{j\in \mathcal{G}_i}P^{ij}Y_{j,t-1} + \sum_{j\in \mathcal{G}_i}P^{ij}\wt Q_{j,t}$, where $\wt Q_t = PQ_t$. Hence $\sum_{j\in \mathcal{G}_i}P^{ij}Y_{j,t}$ obeys an SRE, and Lemma \ref{ref:lemma:basic1} implies that $\sum_{j\in \mathcal{G}_i}P^{ij}Y_{j,t}$ has tail index $\alpha_i$. By repeated use of Lemma \ref{lem:RVconvolution} we conclude that the tail index of $X_{i,t}$ is $\alpha_i$
\end{proof}

\begin{example}
For $l=1$ and $d=3$, suppose that for  $a,b\ne 0$ and $a\ne b$,
  \begin{align*}
    A_1 & = \left(
              \begin{array}{ccc}
                a & b & b \\
                b & a & b \\
                b & b & a \\
              \end{array}
            \right).
  \end{align*}
  Then $D_1 = PA_1P^{-1}$ with
  \begin{align*}
    D_1 & = \left(
             \begin{array}{ccc}
               a-b & 0 & 0 \\
               0 & a-b & 0 \\
               0 & 0 & a+2b \\
             \end{array}
           \right),
           \
           P = \left(
                 \begin{array}{ccc}
                   -\frac{1}{3} & \frac{2}{3} & -\frac{1}{3} \\
                   -\frac{1}{3} & -\frac{1}{3} & \frac{2}{3} \\
                   \frac{1}{3} & \frac{1}{3} & \frac{1}{3} \\
                 \end{array}
              \right),\ \text{and} \
               P^{-1} = \left(
                 \begin{array}{ccc}
                   -1 & -1 & 1 \\
                   1 & 0 & 1 \\
                   0 & 1 & 1 \\
                 \end{array}
               \right). 
             \end{align*}
            In this case, $\alpha_1^{(Y)}=\alpha_2^{(Y)}$, satisfying $\E[|(a-b)m_{1,t}|^{\alpha_1^{(Y)}}]=1$ and $\alpha_3^{(Y)}$ satisfies $\E[|(a+2b)m_{1,t}|^{\alpha_3^{(Y)}}]=1$. We note that $\alpha_1^{(Y)}=\alpha_2^{(Y)}=\alpha_3^{(Y)}$ if and only if $a=-b/2$.  In light of Theorem \ref{thm:simdiag2}, we have that each component of $X_t$ has tail index $\alpha_1^{(Y)}$ if $\alpha_1^{(Y)}<\alpha_3^{Y}$ (i.e. if $|a-b|>|a+2b|$) and index $\alpha_3^{(Y)}$ if $\alpha_3^{(Y)}<\alpha_1^{(Y)}$ (i.e. if $|a-b|<|a+2b|$).
% \begin{remark}
% \label{remark:5}
%  Notice that in all examples, coefficient matrices of  
%  $M_t$ in \eqref{eq:SRE:BEKKARCH1} have fixed subspace constructed with   
%  constants times eigen vectors given by the row vectors of $P^{-1}$. 
%  Thus, the irreduciblity conditions of the standard theorems (Theorem
%  4.4.15 and 4.4.18 of BFM)
%  %in Section \ref{sec:ARCHQ} 
%  are not satisfied.  
% \end{remark}
% }

\end{example}

\section{Simultaneous triangularization}\label{sec:triangularization}
In this section, we consider the BEKK-ARCH(1) process in \eqref{eq:BEKKARCH1} for $d=2$, and $l\ge 1$, where the matrices $A_1,.,,,A_l$
are simultaneous triangularizable in the sense that there exists a nonsingular $P \in M(2,\R)$ such that $U_i = PA_iP^{-1}$ is upper
triangular for all $i=1,..,l$\footnote{By Horn and Johnson (2013, Theorem 2.4.8.7 and the comments thereafter), we have that a  set of square matrices $\{A_i:i=1,...,l\}$, $l\ge 2$
 is simultaneously triangularizable by a unitary matrix if any pair of the set commutes.}.
A special case is that $A_i = U_i$ such that $P=I_2$. Defining $Y_t=PX_t$, we have the SRE representation of the form \eqref{eq:SRE:Y:simdiag1} where $\wt M_t =\sum_{i=1}^{l}m_{it}U_i$ and $\wt Q_t = P Q_t$.
Recall that the original process $X_t$ is easily recovered by
$X_t=P^{-1}Y_t$. We hence study the special case $A_i = U_i$ and $P=I_2$, so
that we work on \eqref{eq:bivariatesre1} with $X_t$ replaced by $Y_t$,
\begin{align}\label{eq:triangularSRE}
   Y_t & = M_t Y_{t-1} + Q_t,
\end{align}
which we may write as
\begin{align}
\label{bivSRE}
  \left(
    \begin{array}{c}
      Y_{1,t}  \\
      Y_{2,t} \\
    \end{array}
  \right)
   & = \left(
        \begin{array}{cc}
          M_{11,t} & M_{12,t} \\
          0 & M_{22,t} \\
        \end{array}
      \right)
           \left(
    \begin{array}{c}
      Y_{1,t-1}  \\
      Y_{2,t-1} \\
    \end{array}
  \right) + \left(
    \begin{array}{c}
      Q_{1,t}  \\
      Q_{2,t} \\
    \end{array}
  \right).
\end{align}
Note that this SRE has the coordinate-wise representation,
\begin{align}
\label{componentSRE1}
 Y_{1,t} &= M_{11,t} Y_{1,t-1} +D_t \\
\label{componentSRE2}
 Y_{2,t} &= M_{22,t} Y_{2,t-1} + Q_{2,t},
\end{align}
where $D_t= M_{12,t}Y_{2,t-1}+Q_{1,t}$.

For notational convenience, we occasionally omit the subscript $0$ in $M_{ij,0},\,Q_{i,0}$ and just write $M_{ij}$ and $Q_i$. Moreover, we define for $t\in\Z$,
\begin{align*}
%  \Pi_{t,s} & =M_t\cdots M_s,\,t\ge s,\quad
% \Pi_{t,s}= I,\,t<s,\quad \mathrm{and}\quad \Pi_t = \Pi_{t,1}, \\
 \Pi_{t,s}^{(i)}& =\prod_{j=s}^t M_{ii,j},\,t\ge s,\,i=1,2,\quad
 \mathrm{and}\quad \Pi_{t,s}^{(i)}=1,\,t<s,\quad \mathrm{and}\quad \Pi_t^{(i)}=\Pi_{t,1}^{(i)}.
\end{align*}
Since $M_t$ is triangular, it holds that the stationarity condition in Assumption \ref{ass:lyapunov} can be simplified. Specifically, let
\begin{align}
\label{eq:triangular-gamma}
 {\gamma_i = \inf_{n\in\N} n^{-1} \E [ \log
 |\Pi_{n}^{(i)}| ]
 =\E[\log |M_{ii}| ].}
\end{align}
Then by Theorem 1.1 of Gerencs\'er et al. (2008)\nocite{GGO}, Assumption \ref{ass:lyapunov} holds if and only if
\begin{align}
\label{stationarity-triangular}
 \max_{i=1,2} \gamma_i <0.
\end{align}
If $l=1$, this condition for $A_{1,ii},\,i=1,2$ reduces to those stated in Remark \ref{rem:statinarity} for the case $q=l=1$. In line with Lemma \ref{ref:lemma:basic1} we assume that there exist $\alpha_1>0$ and $\alpha_2>0$ such that  
\begin{align}
\label{cond:tails:triangle}
 \E [|M_{11}|^{\alpha_1}]=1 \qquad \text{and} \qquad \E
 [|M_{22}|^{\alpha_2}]=1.
\end{align}
By an application of Jensen's inequality, it is easily concluded that condition \eqref{cond:tails:triangle} implies the stationarity condition 
\eqref{stationarity-triangular}; see also Proposition 2.1 of Damek et al. (2019)\nocite{damek:matsui:swiatkowski:2017} and the proof of Theorem \ref{thm:simdiag}. \\

We now turn to the tail behavior of each component \eqref{componentSRE1} and \eqref{componentSRE2}.
By Gaussianity it is easy to see that
\begin{align}\label{eq:expectationstriangle}
 \E [|M_{ii}|^{\alpha_i} {(\log |M_{ii}| )_+ }]<\infty \qquad \text{and} \qquad
 \E [|Q_{i,0}|^{\alpha_i}]<\infty, \quad i=1,2.
\end{align}
Then the tail behavior of component $Y_{2,0}$ is immediate from Lemma \ref{ref:lemma:basic1}. On the other hand, the tail behavior of $Y_{1,0}$, given by the SRE \eqref{componentSRE1}, is far from trivial to obtain as the random sequence $(M_{11,t},D_t)$ is stationary but not i.i.d$.$ and, moreover, $Y_{1,t-1}$ and $D_{t}$ are dependent. Indeed, standard Kesten-Goldie-type theory (i.e. Lemma \ref{ref:lemma:basic1}) is not applicable for this type of SRE, as recently pointed out by Damek et al. (2019)\nocite{damek:matsui:swiatkowski:2017}, who consider the case of $\R_{+}^2$-valued SREs. 
The following theorem states the tail-properties of $Y_{1,0}$ and $Y_{2,0}$. 
\begin{theorem}
\label{thm:triangular}
 Let $k_i=\E[|M_{ii}|^{\alpha_i}\log |M_{ii}|],\,i=1,2.$ Consider the
 bivariate SRE \eqref{bivSRE} such that \eqref{cond:tails:triangle} holds. Then there exists a stationary solution to the SRE, $Y_0$, that satisfies
 \begin{align*}
  \P (Y_{1,0}>x) \sim \P(Y_{1,0}<-x) \sim \Bigg \{
\begin{array}{ll}
\ov c_{1} x^{-\alpha_1} & \mathrm{if}\ \alpha_1
   < \alpha_2 \\
\tilde c_{1} x^{-\alpha_2} &\mathrm{if}\
   \alpha_1 > \alpha_2,
\end{array}
 \end{align*}
and
\begin{align}
\label{tail:y2}
 \P(Y_{2,0}>x) \sim \P(Y_{2,0}<-x) \sim c_2 x^{-\alpha_2},\qquad x \to \infty,
\end{align}
where $k_i>0$ and
\begin{align*}
 \ov c_1 &= \frac{1}{2\alpha_1 k_1}\E \big[ |M_{11}Y_{1,0}+D_0|^{\alpha_1}-|M_{11}Y_{1,0}|^{\alpha_1} \big], \\
 \tilde c_1 &= c_2 \lim_{s\to \infty} \E
 \big[ |\sum_{i=1}^s \Pi_{0,2-i}^{(1)} \Pi_{-i,1-s}^{(2)} M_{12,-i}
 |^{\alpha_1}\big], \\
 c_2 &= \frac{1}{2\alpha_2k_2} \E \big[ |M_{22}Y_{2,0} +Q_{2,0}|^{\alpha_2}-|M_{11}Y_{2,0}|^{\alpha_2} \big].
\end{align*}
\end{theorem}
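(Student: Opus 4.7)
The plan is to treat the existence, the tail of $Y_{2,0}$, and the two regimes for $Y_{1,0}$ separately. Under \eqref{cond:tails:triangle}, Jensen's inequality forces $\gamma_i<0$ for $i=1,2$, so by \eqref{stationarity-triangular} and Assumption \ref{ass:lyapunov} an a.s.\ unique strictly stationary ergodic solution $Y_0$ exists. For the lower component, $Y_{2,0}$ solves the i.i.d.\ one-dimensional SRE \eqref{componentSRE2}; Gaussianity of $(M_{22,t},Q_{2,t})$ makes all hypotheses of Lemma \ref{ref:lemma:basic1} automatic (non-arithmeticity of $\log|M_{22}|$, sign change, moment bounds from \eqref{eq:expectationstriangle}, non-triviality), which immediately delivers \eqref{tail:y2}.

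In the case $\alpha_1<\alpha_2$, my first step would be to observe via Lemma \ref{lem:Breiman} that $M_{12,t}Y_{2,t-1}$ is regularly varying of index $\alpha_2$, and hence the noise $D_t=M_{12,t}Y_{2,t-1}+Q_{1,t}$ of \eqref{componentSRE1} satisfies $\E[|D_t|^{\alpha_1+\varepsilon}]<\infty$ for any small $\varepsilon\in(0,\alpha_2-\alpha_1)$. The difficulty is that the driving pair $(M_{11,t},D_t)$ is stationary and ergodic but not i.i.d.: $M_{11,t}$ and $M_{12,t}$ share the innovations $m_{j,t}$, and $D_t$ carries the whole past of $(M_{22,s},Q_{2,s})_{s<t}$. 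I would then invoke an implicit renewal theorem for stationary non-i.i.d.\ SREs in the spirit of Damek et al.\ (2019), extended from their $\R_+^2$-valued setting to the signed $\R$-valued case via the positive/negative-part splitting already used in the proof of Lemma \ref{lem:Breiman}. Verifying the Goldie non-degeneracy and the moment/mixing hypotheses under BEKK-type dependence then yields $\P(Y_{1,0}>x)\sim\ov c_1 x^{-\alpha_1}$ with the stated Goldie constant.

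In the case $\alpha_1>\alpha_2$, I would iterate \eqref{componentSRE1} backwards to obtain $Y_{1,0}=\sum_{i\ge 1}\Pi_{0,2-i}^{(1)}D_{1-i}$, convergent in $L^p$ for any $p<\alpha_2$ by geometric decay of $\E[|M_{11}|^p]^{i-1}$ (which follows from strict convexity of $t\mapsto\log\E[|M_{11}|^t]$ and $\E[|M_{11}|^{\alpha_1}]=1$). Splitting $D_t=M_{12,t}Y_{2,t-1}+Q_{1,t}$ and iterating \eqref{componentSRE2} down to depth $s$ produces a telescoping identity of the shape
$$Y_{1,0}\;=\;U_s\,Y_{2,-s}\;+\;\Pi_{0,1-s}^{(1)}\,Y_{1,-s}\;+\;S_s,$$
where $U_s$ is the sum appearing in the formula for $\tilde c_1$ (modulo an innocuous index shift), and $S_s$ aggregates products of centred Gaussians, hence has moments of all orders for each fixed $s$. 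Three structural facts drive the rest of the argument: (i) $U_s$ is independent of $Y_{2,-s}$, and $\Pi_{0,1-s}^{(1)}$ is independent of $Y_{1,-s}$, because in each case their underlying innovations live on disjoint time intervals; (ii) $\sup_s\E[|U_s|^{\alpha_2}]<\infty$, which follows from Minkowski's inequality (or the subadditivity of $|\cdot|^{\alpha_2}$ when $\alpha_2\le 1$) combined with $\E[|M_{11}|^{\alpha_2}]<1$ and $\E[|M_{22}|^{\alpha_2}]=1$; (iii) $\E[|\Pi_{0,1-s}^{(1)}|^{\alpha_2}]=\E[|M_{11}|^{\alpha_2}]^s\to 0$ geometrically. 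For each fixed $s$, Lemma \ref{lem:Breiman} applied to $U_sY_{2,-s}$ together with symmetry of $Y_{2,-s}$ gives $\P(U_sY_{2,-s}>x)\sim c_2\E[|U_s|^{\alpha_2}]x^{-\alpha_2}$; Lemma \ref{lem:RVconvolution} absorbs $S_s$, while the $\Pi_{0,1-s}^{(1)}Y_{1,-s}$ contribution is handled through a self-consistent (bootstrap) identity of the form $\tilde c_1=c_2\E[|U_s|^{\alpha_2}]+\tilde c_1\,\E[|M_{11}|^{\alpha_2}]^s$, valid for every $s$. Sending $s\to\infty$ and using (iii) identifies $\tilde c_1=c_2\lim_{s\to\infty}\E[|U_s|^{\alpha_2}]$.

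The principal obstacle is the $\alpha_1<\alpha_2$ case: adapting Damek et al.'s implicit renewal theorem to signed and BEKK-dependent coefficients requires a careful recheck of Goldie's non-arithmeticity and non-degeneracy conditions, since $M_{11,t}$ and $D_t$ fail to be independent and $D_t$ has long-range dependence through $(Y_{2,s})_{s<t}$. A secondary subtlety in the $\alpha_1>\alpha_2$ case is the bootstrap argument: one must know a priori that $x^{\alpha_2}\P(|Y_{1,0}|>x)$ is bounded in $x$ before invoking the fixed-point identity, which I would secure through a preliminary upper/lower bound obtained by truncation of the backward series together with iteration of Breiman's lemma, exploiting the geometric contraction $\E[|M_{11}|^{\alpha_2}]^s\to 0$ to close the bound.
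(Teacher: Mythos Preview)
Your outline for existence and for \eqref{tail:y2} is correct and matches the paper. The two non-trivial regimes need sharpening.

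\textbf{Case $\alpha_1<\alpha_2$.} The appeal to ``an implicit renewal theorem for stationary non-i.i.d.\ SREs \ldots extended to the signed case'' is vaguer than what is actually required. The paper applies Goldie's Theorem~2.3, Case~2, directly: that result only needs $M_{11,0}$ to be independent of $Y_{1,-1}$ (which holds by causality), non-arithmeticity of $\log|M_{11}|$ with $\P(M_{11}<0)>0$, and finiteness of the Goldie integrals $I_\pm=\alpha_1^{-1}\E\big[\big|(D_0+M_{11,0}Y_{1,-1})_\pm^{\alpha_1}-(M_{11,0}Y_{1,-1})_\pm^{\alpha_1}\big|\big]$. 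No i.i.d.\ structure on $(M_{11,t},D_t)$ is used, so the long-range dependence of $D_t$ is not an obstruction here. The genuine work is the moment bound: for $\alpha_1>1$ one reduces $I_\pm<\infty$ to $\E\big[|M_{11,0}Y_{1,-1}|^{\alpha_1-1}|D_0|\big]<\infty$, hence to $\E\big[|Y_{1,-1}|^{\alpha_1-1}|Y_{2,-1}|\big]<\infty$, and this is obtained via H\"older with exponents $p=(\alpha_1-\varepsilon)/(\alpha_1-1)$ and $q=p/(p-1)<\alpha_2$. Your sketch does not isolate this step.

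\textbf{Case $\alpha_1>\alpha_2$.} Your finite-iteration decomposition $Y_{1,0}=U_sY_{2,-s}+\Pi_{0,1-s}^{(1)}Y_{1,-s}+S_s$ is correct and equivalent to the paper's infinite-series split at depth $s$. The ``self-consistent identity'' $\tilde c_1=c_2\E[|U_s|^{\alpha_2}]+\tilde c_1\,\E[|M_{11}|^{\alpha_2}]^s$, however, has two gaps. First, applying Breiman to $\Pi_{0,1-s}^{(1)}Y_{1,-s}$ presupposes that $Y_{1,-s}$ is regularly varying with index $\alpha_2$, which is precisely the conclusion; mere boundedness of $x^{\alpha_2}\P(|Y_{1,0}|>x)$ does \emph{not} suffice for Breiman's lemma. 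Second, even granting regular variation, $U_sY_{2,-s}$ and $\Pi_{0,1-s}^{(1)}Y_{1,-s}$ are dependent (through the shared factors $M_{11,0},\ldots,M_{11,2-s}$ and through the joint dependence of $Y_{1,-s}$ and $Y_{2,-s}$), so the sum-of-tails formula behind your identity is unjustified. The paper sidesteps both issues: it bounds the remainder directly by Markov's inequality combined with the known tail $\P(|Y_{2,0}|>x)\le cx^{-\alpha_2}$, obtaining $\P(|\text{remainder}|>x)\le Cq^sx^{-\alpha_2}$ with $q=\E[|M_{11}|^{\alpha_2}]<1$ and no reference whatsoever to the unknown tail of $Y_1$. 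A sandwich in $\varepsilon$ and $s$ (not a fixed-point identity) then closes the argument. Once you have an a priori bound $\P(|Y_{1,0}|>x)\le Kx^{-\alpha_2}$ --- and this should be secured by Markov on the full backward series, not by ``iteration of Breiman'' --- the same sandwich works with your decomposition; but the bootstrap identity itself should be dropped.
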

\begin{remark}
The above theorem is easily extended to general bivariate SREs of the form \eqref{bivSRE} with $M_t$ and $Q_t$ non-Gaussian, by assuming that \eqref{eq:expectationstriangle} holds, that $\E[|M_{12}|^{\min\{\alpha_1,\alpha_2\}}]<\infty$, and that the law of $\log|M_{ii}|$ conditional on $|M_{ii}|\ne 0$ is non-arithmetic. 	
\end{remark}

\noindent
 In order to prove the theorem, in light of the above discussion, it suffices to establish the tail behavior for $Y_{1,0}$. We emphasize that this is non-trivial as the SRE \eqref{componentSRE1} does not satisfy standard conditions.  We extend the theory recently developed by Damek et al$.$ (2019)\nocite{damek:matsui:swiatkowski:2017} for non-negative SREs to {$\R^2$-valued SREs.} This extension requires lengthy technical arguments given in the Appendix.

% The proof for $Y_2$ is immediate from Theorem \ref{thm:goldie1} and therefore it suffices
% to show the tail behavior for $Y_2$, we need to extend Theorem
% \cite[Therem 3.2]{damek:matsui:swiatkowski:2017}. The idea of extension
% is straightforward but since we treat negative r.v.'s the proof
% has many technical details and we give it in the Appendix.

\begin{example}
Consider the process given in Example \ref{ex32}. 
%where
%$l=1$ and $d=2$ with
% \begin{align*}
%   A_1 & = \left(
%               \begin{array}{cc}
%                 a & c \\
%                 0 & b \\
%               \end{array}
%             \right), \quad \text{$a,b\ne 0$\ and\ $|a|\ne |b|$.}
%  \end{align*}
%which is diagonizable. 
Assuming that
$\E[|am_{1,t}|^{\alpha_1}]= \E[|bm_{1,t}|^{\alpha_2}]=1$, 
% where $\alpha_i^{Y}=\alpha_i^{X},\,i=1,2$.
a direct application of Theorem \ref{thm:triangular} yields the same conclusion in terms of the tail behavior of $X_0$ as in Example \ref{ex32}.
\end{example} 
\begin{example}
\label{ex64}
Consider the SRE in \eqref{eq:triangularSRE} with $l=2$ such that
\begin{align*}
   A_1  & =\left(
               \begin{array}{cc}
                 a & b \\
                 0 & a \\
               \end{array}
             \right), \quad
   A_2  = \left(
               \begin{array}{cc}
                 c & 0 \\
                 0 & \tilde c \\
               \end{array}
             \right),
             \quad a,b,c,\tilde c\neq 0,\,|c|\neq |\tilde c|,\,a+c,a+\tilde c\neq 0.
 \end{align*}
Noting that $A_1$ is non-diagonalizable, $A_1$ and $A_2$ are not simultaneously diagonalizable, but trivially simultaneously triangularizable with $P=I_2$. Suppose that there exist $\alpha_1>0$ and
 $\alpha_2>0$ such that $\E[|am_{1,t}+cm_{2,t}|^{\alpha_1}]= \E[|am_{1,t}+\tilde c
 m_{2,t}|^{\alpha_2}]=1$. Due to Theorem \ref{thm:triangular},
 $X_{1,0}$ has tail index $\alpha_1
 \wedge \alpha_2$  while $X_{2,0}$ has index $\alpha_2$.
\end{example}
In the next example, we consider the case where $l=2$ and $A_1$ and $A_2$ are non-triangular, but simultaneously triangularizable.

\begin{example}
\label{ex65}
 Let $l=2$ and consider the SRE in \eqref{eq:triangularSRE} where 
\begin{align*}
   A_1  & =\left(
               \begin{array}{cc}
                 a & \frac{b-a}{2} \\
                 \frac{a-b}{2} & b \\
               \end{array}
             \right), \quad
   A_2  = \left(
               \begin{array}{cc}
                 a & c \\
                 a-b+c & b \\
               \end{array}
             \right),
             \quad |a| \neq |b|,\,a,b,c\neq 0,\,c\neq \frac{b-a}{2},-a,b.
\end{align*}
Note that $A_1$ and $A_2$ are not commutable (and hence not simultaneously diagonalizable) since 
\[
 [A_1 A_2]_{12} = ac +\frac{b^2-ab}{2} \neq cb + \frac{ab-a^2}{2} = [A_2 A_1]_{12}
\]
where $[\cdot]_{ij}$ is the $ij$ element of matrix in the
 bracket. However, they are simultaneously triangularizable:
 $U_1=PA_1P^{-1}$ and $U_2=P A_2 P^{-1}$ with 
 \begin{align*}
   U_1  & =\left(
               \begin{array}{cc}
                 \frac{a+b}{2} & b-a \\
                 0 & \frac{a+b}{2} \\
               \end{array}
             \right), \quad
   U_2  = \left(
               \begin{array}{cc}
                 a+c & b-a \\
                 0 & b-c \\
               \end{array}
             \right),\quad 
   P  & =\left(
               \begin{array}{cc}
                 \frac{1}{\sqrt{2}} & \frac{1}{\sqrt{2}} \\
                 - \frac{1}{\sqrt{2}} & \frac{1}{\sqrt{2}} \\
               \end{array}
             \right), \quad
   P^{-1}  = \left(
               \begin{array}{cc}
                 \frac{1}{\sqrt{2}} & - \frac{1}{\sqrt{2}} \\
                 \frac{1}{\sqrt{2}} & \frac{1}{\sqrt{2}} \\
               \end{array}
             \right).
\end{align*}
Let $d:=\sqrt{(a+b)^2 /4 +(a+c)^2}$ and $e:=\sqrt{(a+b)^2/4 +(b-c)^2}$, and suppose that for a standard normal random variable $z$, $\E[|d
 z|^{\beta_1}]=\E[|e z|^{\beta_2}]=1$ with $\beta_1\neq
 \beta_2$. Then according to Theorem
 \ref{thm:triangular}, $\alpha_1^{(Y)}=\beta_1 \wedge \beta_2$ and
 $\alpha_2^{(Y)}=\beta_2$. Since $X=P^{-1}Y $ by Lemma \ref{lem:RVconvolution},
$\alpha_1^{(X)}=\alpha_2^{(X)}=\beta_1$ when $\beta_1
 <\beta_2$. However, for $\beta_2<\beta_1$, so that
 $\alpha_1^{(Y)}=\alpha_2^{(Y)}=\beta_2$, it is not clear how to determine the tail behavior of $X_0$, since $Y_{1,0}$ and $Y_{2,0}$ are dependent.
\end{example}

\begin{remark}
(a) 
 Consider the simple SRE in \eqref{eq:triangularSRE} where $l=1$ and $A_1$ non-diagonalizable,
  \begin{align*}
    A_1 & = \left(
               \begin{array}{cc}
                 a & 1 \\
                 0 & a \\
               \end{array}
             \right), \quad a\neq 0.
  \end{align*}
In this case, \eqref{cond:tails:triangle} implies that $\alpha_1 =
 \alpha_2$, and obtaining the tail properties of $Y_{1,0}$ appears to be
 non-trivial task. A similar case has recently been studied by Damek and
 Zienkiewicz (2018)\nocite{damek:zienkiewicz:2017} who consider a SRE of
 the type \eqref{eq:triangularSRE} with $M_{11}$ and $M_{22}$
 non-negative almost surely. We leave the case $\alpha_1 = \alpha_2$ for
 future research.  \\
 (b) Our results here could be extended to the $d$-dimensional case by
 extending recent results for $\R^d_+$-valued SREs by Matsui and
 \'{S}wi\k{a}tkowski (2018)\nocite{matsui:swiatkowski:2018}. We leave this extension to future work. 
 %\\
 % (c) In all examples the irreducible conditions of standard theorems (see
 % Remark \ref{remark:5}) are violated.  
 % In example \ref{ex64}, vectors parallel with $(0,1)'$ are closed in
 % oparation $M_t$, and in example \ref{ex65}, the eigen space $k (1,1)',\,k\in \R$ is invariant.}   
% [IS THIS IMPORTANT???] additionally assumed. Since $A_1$ is a basic form the Jordan block in
%  the Jordan decomposition, this case would be an important open problem.
% Moreover, when $l=2,\,d=2$ with
%  \begin{align*}
%    A_1  & =\left(
%                \begin{array}{cc}
%                  a & b \\
%                  0 & c \\
%                \end{array}
%              \right), \quad
%    A_2  = \left(
%                \begin{array}{cc}
%                  c & d \\
%                  0 & a \\
%                \end{array}
%              \right),
%              \quad a,b,c,d\neq 0,\,a \neq c.
%  \end{align*}
% where $A_1$ and $A_2$ are not commutable so that we could not simultaneously
%  diagnize them. Although $\alpha_1=\alpha_2$ and $am_{1,t}+c
%  m_{2,t}\stackrel{d}{=} c m_{1,t}+ a m_{2,t}$, we have
% \[
%  am_{1,t}+c m_{2,t} \neq c m_{1,t}+ a m_{2,t},\quad a.s.
% \]
% This case is also excluded from scope of \cite{damek:zienkiewicz:2017}, and would be more challenging.
\end{remark}

\section{Tail properties of BEKK-ARCH($q$)} \label{sec:ARCHQ}
In this section we consider the tail properties of the BEKK ARCH process of order $q \ge 1$. Recall that this process has the SRE representation given by \eqref{eq:SRE_V}-\eqref{eq:def_M}, and the main idea is to show that the SRE satisfies certain irreducibility and contraction conditions recently considered by Guivarc'h and Le Page (2016)\nocite{guivarch:lepage:2016}, see also Section 4.4.8 of BDM. 

With $M_t$ defined in \eqref{eq:def_M}, let $\P_M$ denote its distribution. Define 
\begin{eqnarray} \label{eq:def:GM}
	G_M=\{s\in M(d,\R) : s = a_1 \cdots a_n, \ a_i \in \mathrm{supp}\P_M, \ i = 1,\ldots,n, \ n\in \N \},
\end{eqnarray}
where $\mathrm{supp}\P_M$ denotes the support of $\P_M$. We initially make the following high-level assumptions (BDM, p.189):
\begin{assumption}\label{ass:ip}
	(a) With $G_M$ defined in \eqref{eq:def:GM}, there exists no finite union $\mathcal{W} = \bigcup_{i=1}^n W_i$ of proper subspaces $W_i \subsetneq \R^{dq}$ such that for any $v \in G_M$, $v \mathcal{W}=\mathcal{W}$. (b) $G_M$ contains a matrix that has a unique largest eigenvalue in modulus with multiplicity one. 
\end{assumption}
Assumption \ref{ass:ip}(a) is an irreducibility condition, and
Assumption \ref{ass:ip}(b) is a contraction condition stating that $G_M$ contains a proximal matrix. In Lemmas \ref{lem:irreducibility}-\ref{lem:contraction} below we state more primitive sufficient conditions for Assumption \ref{ass:ip}. The following theorem states that the stationary solution to the SRE in \eqref{eq:SRE_V}-\eqref{eq:def_M} is multivariate regularly varying; see e.g.Resnick (2007)\nocite{resnick:2007}.

\begin{theorem}\label{thm:ARCHq}
 For the BEKK ARCH process of order $q \ge 1$ with SRE representation given by \eqref{eq:SRE_V}-\eqref{eq:def_M}, suppose that Assumptions \ref{ass:lyapunov} and \ref{ass:ip} hold, that $\P[\mathrm{det}(M_{q,t})=0]=0$, and that there exists $\alpha >0$ such that $\inf_{n\in \N}(\E[\Vert M_1\dots M_n \Vert^{\alpha}])^{1/n}=1$. Then the stationary solution, $V_t$, to the SRE is multivariate regularly varying with index $\alpha$, i.e. there exists a probability measure $\P_{\Theta}$ on $\S^{dq-1}$ such that 
 \begin{eqnarray}\label{eq:MRV}
 	\frac{\P(\vert V_t \vert > s x, \wt V_t \in \cdot)}{\P(\vert V_t \vert > x)} \overset{w}{\rightarrow} s^{-\alpha}\P_{\Theta}(\cdot), \quad \text{as $x \to \infty$}, \quad s>0, \quad \wt V_t = V_t/\vert V_t \vert,
 \end{eqnarray}
 where $\overset{w}{\rightarrow}$ denotes weak convergence.
\end{theorem}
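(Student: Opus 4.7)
The plan is to apply the general multivariate regular variation theorem for invertible matrix stochastic recurrence equations due to Guivarc'h and Le Page (2016), as packaged in Section 4.4.8 of BDM. That theorem requires five ingredients: (i) existence of a stationary causal solution; (ii) almost sure invertibility of the multiplier matrix in $GL(dq,\R)$; (iii) the $i$-$p$ condition of irreducibility and proximality on the semigroup generated by $\supp\P_M$; (iv) a Kesten-Goldie-type spectral condition $\inf_{n\in\N}(\E[\Vert M_1\cdots M_n\Vert^\alpha])^{1/n}=1$; and (v) a matching $(\alpha+\varepsilon)$-moment bound on the additive innovation $Q_t$.

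I would verify each ingredient in turn. For (i), Assumption \ref{ass:lyapunov} combined with the Gaussian moment bounds $\E[(\log\Vert M_t\Vert)_+] + \E[(\log|Q_t|)_+] < \infty$ produces a unique stationary solution via BDM Theorem 4.1.4, as already noted in Section \ref{sec:SRE}. For (ii), the block companion form of $M_t$ in \eqref{eq:def_M} makes $|\det M_t|$ equal, up to sign, to $|\det M_{q,t}|$, so the hypothesis $\P[\det(M_{q,t})=0]=0$ ensures $M_t \in GL(dq,\R)$ almost surely. Ingredient (iii) is precisely Assumption \ref{ass:ip}: part (a) rules out a finite union of proper invariant subspaces (strong irreducibility), while part (b) supplies a proximal element whose spectrum contains a unique simple eigenvalue of maximum modulus. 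Ingredient (iv) is exactly the hypothesis imposed on $\alpha$ in the theorem statement, and (v) is immediate from the Gaussianity of $B_t$, which gives $\E[|Q_t|^{\alpha+\varepsilon}] < \infty$ for every $\varepsilon > 0$.

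With all five ingredients verified, the Guivarc'h-Le Page theorem directly delivers \eqref{eq:MRV}, with the spectral measure $\P_\Theta$ identified as the unique invariant probability on $\S^{dq-1}$ of the Markov kernel on the sphere induced by the projective action of $G_M$. The main obstacle is bookkeeping rather than fresh mathematics: one must confirm that the BEKK-specific hypotheses line up with the precise statement in BDM, and in particular translate the full block-companion invertibility of $M_t$ into the single scalar-looking condition on the corner block $M_{q,t}$. Once this translation is in place, the conclusion follows by direct application of the cited result and no additional analytic work is required.
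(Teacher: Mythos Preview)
Your approach is exactly the paper's: verify the hypotheses of the Guivarc'h--Le Page theorem (Theorem~5.2 there, or Theorem~4.4.18 in BDM) and read off the multivariate regular variation conclusion. The reduction of invertibility of the companion matrix $M_t$ to the corner block $M_{q,t}$ is the same observation the paper makes.

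However, your list of ``five ingredients'' is incomplete relative to the hypotheses that the cited theorem actually requires, and two omissions should be flagged. First, the theorem needs the non-degeneracy condition $\P(M_t x + Q_t = x)<1$ for all $x\in\R^{dq}$; this is immediate here because $M_t$ and $Q_t$ are independent and $Q_t$ is non-degenerate Gaussian, but it is a separate hypothesis and you do not mention it. Second, and more substantively, the moment hypotheses of Guivarc'h--Le Page include not only $\E[|Q_t|^{\alpha+\delta}]<\infty$ but also $\E[\Vert M_t\Vert^{\alpha+\delta}]<\infty$ and the mixed bound $\E[\Vert M_t\Vert^{\alpha}\Vert M_t^{-1}\Vert^{\delta}]<\infty$ for some $\delta>0$. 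The first of these is trivially Gaussian; the second is less automatic, since it involves negative moments of the companion matrix (equivalently of $M_{q,t}$), and is dispatched in the paper by Gaussianity of the entries together with H\"older's inequality for $\delta$ small. Your proposal suppresses these moment conditions on $M_t$ entirely, which leaves a genuine (if easily filled) gap in the verification.
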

The multivariate regular variation in \eqref{eq:MRV} implies that for any $y\in \S^{dq-1}$, $P(y'V_t > x) \sim c(y) x^{-\alpha}$ as $x\to \infty$, where $c(y)$ may depend on $y$ and  $c(\tilde y)>0$ for some $\tilde y \in \S^{dq-1}$. Moreover, $|V_t|$ is regularly varying with index $\alpha$. 
\begin{proof}
 The theorem is proved by verifying the conditions of Theorem 5.2 of
 Guivarc'h and Le Page (2016)\nocite{guivarch:lepage:2016}; see also
 Theorem 4.4.18 in BDM. It suffices to show that (i) $M_t$ is invertible
 almost surely, (ii) for all $x\in \R^{dq}$, $\P(M_t x + Q_t = x )<1$,
 and (iii) $\E[\Vert M_t \Vert^{\alpha + \delta} ]<\infty$, $\E[\Vert
 M_t \Vert^{\alpha} \Vert M_t^{-1} \Vert^{\delta} ]<\infty$, and 
{$\E[| Q_t |^{\alpha + \delta} ]<\infty$} for some  $\delta >0$. Condition (i) is clearly satisfied as $\mathrm{det}(M_t) = \mathrm{det}(M_{q,t}) \ne 0 $ almost surely. Condition (ii) is immediate as $M_t$ and $Q_t$ are independent and $Q_t$ is non-degenerate. Condition (iii) holds by noting that the elements of $M_t$ and $Q_t$ are Gaussian and an application of H\"older's inequality, choosing $\delta >0$ sufficiently small.
\end{proof}

{The following lemmas give sufficient conditions for Assumption \ref{ass:ip}(a).} 
\begin{lemma} \label{lem:irreducibility}
	With $M_{1,t},\ldots,M_{q,t}$ the random matrices in \eqref{eq:def_M}, let $M_t^{(1,q)}$ denote the $d \times dq$ matrix given by 
	\begin{eqnarray}\label{eq:def:M1q}
		M_t^{(1,q)} = (M_{1,t},\ldots,M_{q,t}).
	\end{eqnarray}
 	Suppose that for any non-zero $x \in \R^{dq}$ the distribution of $M_t^{(1,q)} x$ has a density with respect to the Lebesgue measure strictly positive on $\R^d$. Then Assumption \ref{ass:ip}(a) holds. 	
\end{lemma}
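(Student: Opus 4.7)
The plan is to argue by contradiction: assume there exists a finite union $\mathcal{W} = \bigcup_{i=1}^n W_i$ of proper subspaces $W_i \subsetneq \R^{dq}$ satisfying $v \mathcal{W} = \mathcal{W}$ for every $v \in G_M$; in particular $s \mathcal{W} \subset \mathcal{W}$ for every $s \in \supp \P_M$, and the goal is to contradict properness of some $W_{i}$. The starting point is the block structure of $M_t$ in \eqref{eq:def_M}: writing $v = (v_1', \ldots, v_q')'$ with $v_i \in \R^d$, one has $M_t v = ((M_t^{(1,q)} v)', v_1', \ldots, v_{q-1}')'$. By the density hypothesis, for every nonzero $v$ the law of $M_t^{(1,q)} v$ is supported on all of $\R^d$, so the orbit $\{s v : s \in \supp \P_M\}$ coincides with the $d$-dimensional affine subspace
\begin{align*}
A_v := \{(u', v_1', \ldots, v_{q-1}')' : u \in \R^d\},
\end{align*}
which is therefore contained in $\mathcal{W}$.

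I would then invoke a Baire/Lebesgue argument: each $W_i \cap A_v$ is either empty, a proper affine subspace of $A_v$, or all of $A_v$, and a finite union of proper affine subspaces of $A_v$ has zero Lebesgue measure in $A_v$, so some $W_{i_1}$ must contain $A_v$ entirely. Iterating, for each nonzero $w \in A_v$ the orbit argument gives $A_w \subset \mathcal{W}$; as $w = (w_1', v_1', \ldots, v_{q-1}')'$ varies with $w_1$ running over $\R^d$, the union of the $A_w$'s fills a co-null subset of the $2d$-dimensional affine subspace $\{(u_1', u_2', v_1', \ldots, v_{q-2}')' : u_1, u_2 \in \R^d\}$, and the same measure argument forces this affine subspace into a single $W_{i_2}$. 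Inductively, after $k$ iterations one obtains a $dk$-dimensional affine subspace $B_k \subset W_{i_k}$ whose vectors have the form $(u_1', \ldots, u_k', v_1', \ldots, v_{q-k}')'$; after $q$ iterations $B_q = \R^{dq} \subset W_{i_q}$, contradicting properness.

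The only delicate bookkeeping issue is the degenerate case in which $v_1 = \cdots = v_{q-k} = 0$, where the zero vector lies in the current affine subspace and must be excluded from the sweep; but removing this single point from $B_k$ deletes only a null slice of the next affine subspace $B_{k+1}$, so the Baire-type argument still closes. The main substantive content is the first paragraph's observation that the density hypothesis converts any nonzero $v \in \mathcal{W}$ into an entire $d$-dimensional affine slice of $\mathcal{W}$; once this is available, the induction on $k$ is essentially automatic, and the contradiction is immediate.
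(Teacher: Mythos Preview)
Your proof is correct and follows essentially the same route as the paper's: both start from a nonzero vector in $\mathcal{W}$, use the density hypothesis to show that the action of $\supp\P_M$ sweeps out a full $d$-dimensional affine slice inside $\mathcal{W}$, then iterate $q$ times to force one $W_i$ to be all of $\R^{dq}$. You are in fact a bit more explicit than the paper about the Lebesgue/pigeonhole step (why a single $W_i$ must swallow each affine slice) and about the degenerate case where some tail blocks vanish; the only cosmetic slip is that the orbit $\{sv:s\in\supp\P_M\}$ is a priori only dense in $A_v$, but since $\mathcal{W}$ is closed this does not affect the argument.
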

 \begin{proof}
 	 The proof extends the arguments in Section 4.4.9 of BDM to arbitrary dimension $d$. The strategy is to show that the only space that satisfies $v \mathcal{W}=\mathcal{W}$ for all $v \in G_M$ is $\mathcal{W} = \R^{dq}$. We show this by contradiction by assuming that the space $\mathcal{W}$ is not equal to $\R^{dq}$. Specifically, $\mathcal{W} = \bigcup_{i=1}^n W_i$ for proper subspaces $W_i \subsetneq \R^{dq}$. Let $x$ be some non-zero vector from one of the subspaces, and consider the partition $x = (x_1',\ldots, x_q')'$, $x_i \in \R^d$. Let $M_{(1)}^{(1,q)},\ldots,M_{(q)}^{(1,q)}$ denote $q$ independent copies of $M_{t}^{(1,q)}$, and likewise let $M_{(1)},\ldots, M_{(q)}$ denote $q$ independent copies of $M_t$, where the first $d$ rows of $M_{(i)}$ are given by $M_{(i)}^{(1,q)}$. Then 
 	 \begin{eqnarray*}
 	 	M_{(1)}x = \left(
 	  	             \begin{array}{c}
                 M_{(1)}^{(1,q)}x \\
                 x_1 \\
                 \vdots \\
                  x_{q-1} \\
               \end{array}\right).
 	 \end{eqnarray*}
 	 Since $M_{(1)}^{(1,q)}x$ has a Lebesgue density strictly positive on $\R^d$, necessarily there must be a subspace $W_{i_1}$ satisfying $\mathcal{V}_1 :=\{(z_1',x_1',\ldots,x_{q-1}')'$: $z_1\in \R^d  \}  \subset W_{i_1}$. Next, the action $M_{(2)}$ on $\mathcal{V}_1$ yields, 
 	 \begin{eqnarray*}
 	 	M_{(2)}v_1 = \left(
 	  	             \begin{array}{c}
                 M_{(2)}^{(1,q)}v_1 \\
                 z_1 \\
                 x_1 \\
                 \vdots \\
                  x_{q-2} \\
               \end{array}\right),\quad v_1=(z_1',x_1',\ldots,x_{q-1}')'.
 	 \end{eqnarray*}
 	 Using again that $M_{(2)}^{(1,q)}v_1$ has a Lebesgue density strictly positive on $\R^d$ (for $v_1 \ne 0$), there must exist a subspace $W_{i_2}$ such that $\mathcal{V}_2 :=\{(z_1',z_2',x_1',\ldots,x_{q-2}')'$: $z_1,z_2\in \R^d  \}  \subset W_{i_2}$. By repeating these arguments we conclude that one of the subspaces $W_i$ equals $\R^{dq}$.
 \end{proof}

For $q=1$ with $d=l=2$ the following lemma is useful. The lemma is also
applicable for checking the irreducibility condition in Alsmeyer and
Mentemeier (2012); see Alsmeyer and Mentemeier (2012, Condition (A4)) \nocite{alsmeyer:mentmeier2012} and Theorem 4.4.15 of BDM.

\begin{lemma}
\label{lem:irreducibility:l2}
 Let $M_t\,(=M_{1,t}=M_{t}^{(1,1)})=m_{1,t}A_1+m_{2,t}A_2$
with $A_1,\,A_2 \in M(2,\R)$ where $m_{i,t}\sim N(0,1),\,i=1,2$ are
 independent, namely SRE \eqref{eq:SRE_V} reduces to SRE \eqref{eq:SRE:BEKKARCH1}. 
With $x=(x_1,x_2)' \in
 \R^2$ we write $x^{(n)}=\Pi_{i=1}^n M_i x$. For any $x\ne 0$ assume that there exists $n
 \in \N$ such that almost surely 
\[
 A_1 x^{(n)},\,A_2 x^{(n)} \neq 0,\quad \text{and} \quad A_1 x^{(n)}
 \neq k A_2 x^{(n)}\quad \text{for any}\, k\in \R, 
\]
i.e. the vectors $A_1 x^{(n)}$ and $A_2 x^{(n)}$ are not parallel. Then Assumption
 \ref{ass:ip}(a) holds. 
\end{lemma}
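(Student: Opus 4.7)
The plan is to argue by contradiction. Suppose that $\mathcal{W}=\bigcup_{i=1}^N W_i$ is a finite union of proper subspaces of $\R^2$ that is $G_M$-invariant. Since every proper subspace of $\R^2$ is either $\{0\}$ or a line through the origin, $\mathcal{W}$ is a finite union of lines, hence a Lebesgue null subset of $\R^2$. The contradiction will come from showing that the image of a non-zero $x\in\mathcal{W}$ under a product of enough random matrices $M_i$ must simultaneously lie in $\mathcal{W}$ almost surely (by invariance) and have a density strictly positive on $\R^2$ (by the hypothesis of the lemma).

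The key computation to record first is the following: if $y \in \R^2$ is such that $A_1 y$ and $A_2 y$ are linearly independent, then the linear map $(u_1,u_2)\mapsto u_1 A_1 y + u_2 A_2 y$ is a bijection of $\R^2$, so that $M_{n+1}y = m_{1,n+1}A_1 y + m_{2,n+1}A_2 y$ is, conditionally on $y$, a non-degenerate bivariate Gaussian with density strictly positive on the whole of $\R^2$. This is the only place where Gaussianity of the innovations is used; any bivariate distribution with a density strictly positive on $\R^2$ would serve.

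Now choose any $x\in W_1\setminus\{0\}$ and let $n=n(x)$ be the integer furnished by the hypothesis, so that almost surely $A_1 x^{(n)}$ and $A_2 x^{(n)}$ are linearly independent. Since each $M_i$ lies in $\supp \P_M$ almost surely, the random product $M_n\cdots M_1$ lies in $G_M$ almost surely, and the invariance $v\mathcal{W}=\mathcal{W}$ for every $v\in G_M$ combined with $x\in\mathcal{W}$ forces $x^{(n)}\in\mathcal{W}$ almost surely; the same reasoning applied one step further yields $x^{(n+1)}=M_{n+1}x^{(n)}\in\mathcal{W}$ almost surely. On the other hand, conditioning on $x^{(n)}$ and using its independence from $M_{n+1}$ together with the density statement above, the conditional law of $x^{(n+1)}$ is absolutely continuous on $\R^2$ on the full-probability event that $A_1 x^{(n)}, A_2 x^{(n)}$ are linearly independent. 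Integrating over $x^{(n)}$ yields $\P(x^{(n+1)}\in\mathcal{W})=0$, contradicting the almost sure membership.

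The main obstacle I anticipate is essentially bookkeeping: one must check cleanly that the random product $M_n\cdots M_1$ genuinely lies in $G_M$ almost surely, so that the invariance condition may legitimately be applied to it. This follows because the support of a product of independent random matrices is contained in the corresponding product of supports, but should be stated carefully in terms of the definition of $G_M$. A secondary point is to ensure that the density argument is invoked only on the full-probability event where the conditional linear independence holds; outside this event the density claim would fail, but it contributes nothing to the probability computation.
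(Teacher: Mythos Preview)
Your argument is correct and is exactly the approach of the paper, only spelled out in much greater detail. The paper's proof records the single key observation---that $M_{n+1}x^{(n)}=m_{1,n+1}A_1x^{(n)}+m_{2,n+1}A_2x^{(n)}$ takes values in all of $\R^2$ once $A_1x^{(n)}$ and $A_2x^{(n)}$ are linearly independent---and leaves the contradiction with invariance of a finite union of lines implicit; you have made that contradiction explicit via the Lebesgue-null versus absolutely-continuous dichotomy, and your bookkeeping about $M_n\cdots M_1\in G_M$ almost surely is correct.
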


\begin{proof}
 It suffices to observe that since $ m_{i,n+1}\sim N(0,1),\,i=1,2$ are independent, 
\[
 M_{n+1} x^{(n)} = m_{1,n+1}A_1 x^{(n)} + m_{2,n+1} A_2 x^{(n)} 
\] 
may take any value in $\R^2$. Thus Assumption \ref{ass:ip}(a) follows.
\end{proof}

The following lemma gives a sufficient condition for Assumption  \ref{ass:ip}(b).
\begin{lemma} \label{lem:contraction}
 With $M_t$ given in \eqref{eq:def_M}, suppose that for any $i=1,\ldots,q$, $M_{i,t}$ has a density with respect to the Lebesgue measure on $M(d,\R)$ that is strictly positive on a neighborhood around zero. Then Assumption \ref{ass:ip}(b) holds.
 \end{lemma}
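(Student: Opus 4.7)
The plan is to exhibit an explicit single matrix $v \in \supp\P_M \subseteq G_M$ that is proximal, by taking $v$ to be a block companion matrix with a carefully chosen top-left block and the remaining top-row blocks set to zero.

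First, the hypothesis must be upgraded to a statement about $\supp\P_M$. By construction $M_{i,t} = \sum_{j=1}^{l} m_{i,j,t} A_{ij}$, and the Gaussians $m_{i,j,t}$ are independent across all indices by the setup of Section \ref{sec:SRE}; hence $M_{1,t},\ldots,M_{q,t}$ are mutually independent. Combined with the marginal positive-density hypothesis, the joint law of $(M_{1,t},\ldots,M_{q,t})$ therefore admits a density on $M(d,\R)^q$ that is strictly positive on some product neighborhood $U$ of zero. Consequently $\supp\P_M$ contains every block companion matrix
\[
C(A_1,\ldots,A_q) := \left(\begin{array}{cccc} A_1 & A_2 & \cdots & A_q \\ I_d & & & 0 \\ & \ddots & & \vdots \\ & & I_d & 0 \end{array}\right), \qquad (A_1,\ldots,A_q) \in U,
\]
and in particular $v := C(A_1,0,\ldots,0) \in \supp\P_M \subseteq G_M$ whenever $A_1$ is small enough that $(A_1,0,\ldots,0) \in U$.

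Next, I would compute the spectrum of $v$ directly. Since $\mu I_{dq} - v$ is block lower triangular with diagonal blocks $\mu I_d - A_1, \mu I_d, \ldots, \mu I_d$ (with $q-1$ copies of $\mu I_d$), its determinant factors as
\[
\det(\mu I_{dq} - v) = \mu^{d(q-1)} \det(\mu I_d - A_1).
\]
Taking $A_1 = \mathrm{diag}(\epsilon, \epsilon/2, \ldots, \epsilon/2^{d-1})$ for any sufficiently small $\epsilon > 0$, the spectrum of $v$ consists of $0$ with algebraic multiplicity $d(q-1)$ together with the $d$ simple eigenvalues $\epsilon, \epsilon/2, \ldots, \epsilon/2^{d-1}$. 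The eigenvalue $\epsilon$ is the unique largest in modulus and has multiplicity one, so $v$ is proximal, which verifies Assumption \ref{ass:ip}(b). I foresee no real technical obstacle: the only mildly delicate point is the upgrade from the marginal positive-density hypothesis to a positive joint density near zero, which is automatic here thanks to the independence of $M_{1,t},\ldots,M_{q,t}$ inherited from the BEKK construction.
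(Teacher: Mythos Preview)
Your proof is correct and follows the same approach as the paper: the paper's proof is the single sentence ``The result is immediate by noting that $M_{i,t}$ and $M_{j,t}$ are independent for $i\ne j$,'' and your argument spells out explicitly how this independence yields a proximal matrix in $\supp\P_M \subseteq G_M$.
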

 \begin{proof}
 	The result is immediate by noting that $M_{i,t}$ and $M_{j,t}$ are independent for $i\ne j$.
 \end{proof}
The BEKK-ARCH process in the following example satisfies Assumption \ref{ass:ip}.
\begin{example}
	Consider the case $d=q=2$ and $l=4$ where
	  \begin{align*}
    A_{i1} & = \left(
               \begin{array}{cc}
                 a_{i1} & 0 \\
                 0 & 0 \\
               \end{array}
             \right),
              A_{i2}  = \left(
               \begin{array}{cc}
                 0 & 0 \\
                 a_{i2} & 0 \\
               \end{array}
             \right),
             A_{i3}  = \left(
               \begin{array}{cc}
                 0 & a_{i3} \\
                 0 & 0 \\
               \end{array}
             \right),
             A_{i4}  = \left(
               \begin{array}{cc}
                 0 & 0 \\
                 0 & a_{i4} \\
               \end{array}
             \right), \ i=1,2,
  \end{align*}
  for some non-zero $a_{ij}$. Since all elements of the matrices $M_{1,t}$ and $M_{2,t}$ are independent and Gaussian, we have that $M_{1,t}$ and $M_{2,t}$ have  densities strictly positive on $M(d,\R)$. Moreover, for any non-zero $x \in \R^{4}$ the distribution of $M_t^{(1,2)} x$ has a density that is strictly positive on $\R^2$. By Lemmas and \ref{lem:irreducibility} and \ref{lem:contraction}, the process satisfies Assumption \ref{ass:ip}. We may also note that $\P[\mathrm{det}(M_{2,t})=0]=0$. For suitable values of constants $a_{ij}$ Assumption \ref{ass:lyapunov} holds and there exists $\alpha >0$ such that $\inf_{n\in \N}(\E[\Vert M_1\dots M_n \Vert^{\alpha}])^{1/n}=1$. Hence, under these conditions, Theorem \ref{thm:ARCHq} applies. 
\end{example}

In the following example we consider the case where $q=1$ and $d=l=2$
and the matrices $A_1$ and $A_2$ are neither simultaneously
diagonalizable nor simultaneously triangularizable. 

\begin{example}
Suppose that $q=1,\,d=l=2$ and consider \eqref{eq:SRE:BEKKARCH1} with 
 $M_t=m_{1,t}A_1 +m_{2,t}A_2$ where 
\begin{align*}
   A_1  & =\left(
               \begin{array}{cc}
                 a & b \\
                 b & a \\
               \end{array}
             \right), \quad
   A_2  = \left(
               \begin{array}{cc}
                 a & b \\
                 -b & -a \\
               \end{array}
             \right),
             \quad |a|> |b|,\,a,b \neq 0.
\end{align*}
Since the eigenvalues of $A_1$ and $A_2$ are respectively $a\pm b$ and
 $\pm\sqrt{a^2-b^2}$, $A_1$ and $A_2$ are diagonalizable. However, due to
 non-commutability, they are not simultaneously diagonalizable. 
 We check the conditions of Theorem \ref{thm:ARCHq}. Define a set of
 vectors $\mathcal V= \{x=(x_1,x_2)'\in \R^2 \mid x= (\pm b,\mp
 a)',\,(\pm a,\mp b)' \}.$ For non-zero $x \in \R^2 \setminus \mathcal
 V$, 
 $A_1 x$ and $A_2 x$ are linearly independent, while for $x\in
 \mathcal V$, $M_1 x$ is proportional to either $(0,1)'\not \in \mathcal
 V$ or $(1,0)' \not \in \mathcal V$. 
Thus via Lemma \ref{lem:irreducibility:l2}, Assumption \ref{ass:ip}(a)
 holds. For Assumption \ref{ass:ip}(b) take $n=1$ in $G_M$ and observe
 that 
$
 |M_{1,t}-\lambda I|=0\, \Leftrightarrow \, \lambda^2 -2 a m_{1,t}
 \lambda + (a^2-b^2) (m_{1,t}^2 -m_{2,t}^2) =0,
$
so that the eigenvalues may differ. Note that $\mathrm{det}
 (M_t)=(a^2-b^2)(m_{1,t}^2+m_{2,t}^2) \ne 0$ almost surely. In order to assure the existence of
 $\alpha>0$ such that $\inf_{n\in \N}(\E\|M_1\cdots
 M_n\|^\alpha)^{1/n}=1$, as in Remark 4.4.16 of  BDM, it is enough to
 assume that for some $p>0$, $\E[(\lambda_{\min}(M_t M_t'))^{p/2}]\ge 1$
 where $\lambda_{\min}(M_t M_t')$ is the smallest eigenvalue of
 $M_t M_t'$. However, in view of the characteristic equation of $M_t
 M_t'$:
\[
 \lambda^2 -2(a^2+b^2)(m_{1,t}^2+m_{2,t}^2)\lambda +(a^2-b^2)(m_{1,t}^2
 -m_{2,t}^2)=0, 
\] 
non-negative eigenvalues of $M_tM_t'$ are proportional to $(a,b)$ and we
 can choose appropriate values. In a similar manner, we can adjust
 $(a,b)$ so that Assumption \ref{ass:lyapunov} holds. 
\end{example}

% \section{Tail properties for $H_t$}

% Tail-behavior of $H_t$. Use that $H_t$ is conditionally (on $X_{t-2}$) Wishart.
% Do we want this???

% \section{Section on tail dependence in the case of same indexes}
%   Consider a measure of tail dependence for the triangular and diagonal cases when the tail indexes are the same. Do we want this??

\section{Concluding remarks}\label{sec:conclusion}
We conclude by stating some important directions for future research. For the cases considered in Sections \ref{sec:diagonalization} and \ref{sec:triangularization}, we focused on the \emph{component-wise} tail behavior of $X_t$. Ideally, one would also be interested in obtaining results for the dependence structure of $X_t$, as this can be used for establishing stable limit theory for $X_t$, see e.g. Section 4.5 of BDM and Pedersen and Wintenberger (2018, Section 4).  As the components, or marginals, of $X_t$ have different indexes of regular variation, it seems appealing to find conditions such that $X_t$ is \emph{non-standard regularly varying} in the sense of Resnick (2007, Section 6.5.6) or \emph{vector scaling regularly varying} as introduced in Pedersen and Wintenberger (2018). Finding such conditions for general multivariate SREs is a tremendous task and an active area of research.

The SRE representation for the BEKK-ARCH process in \eqref{eq:SRE_V}-\eqref{eq:def_M} relies crucially on the assumption that the noise variable $Z_t$ is Gaussian and that the process is of the ARCH-type, i.e. $H_t$ does not include lagged values of itself. Characterizing the tail behavior of general GARCH-type BEKK processes with non-Gaussian noise is indeed an interesting open issue that inherently seems to require another approach than relying on as SRE representation of the processes.

 % Simultaneous triangularization for $d \geq 2$. We are done with the case $d=2$. The case $d>2$ appears to be a non-trivial extension. Perhaps just mention this in the manuscript.\\
 % Simultaneous triangularization where the tail indices are the same. Difficult (if possible). E.g.: $l=1$ and we consider a upper triangular matrix $A_1$ where some of the diagonal elements are the same. E.g.
 %  \begin{align*}
 %    A_1 & = \left(
 %               \begin{array}{cc}
 %                 a & b \\
 %                 0 & a \\
 %               \end{array}
 %             \right),
 %  \end{align*}
 % with $a\ne 0$ and $a\ne b$.

\section*{Appendix: Proof of Theorem \ref{thm:triangular}}

Throughout the proof, we apply the following component-wise series representations of the unique stationary solution to \eqref{bivSRE}, $Y_t =(Y_{1,t},Y_{2,t})$, which are given by
\begin{align}
\label{componentwiseSol1}
 Y_{1,t} &= \sum_{i=1}^\infty \Pi_{t,t+2-i}^{(1)} D_{t+1-i},\quad
 \text{where}\quad  D_t= M_{12,t}Y_{2,t-1}+Q_{1,t}, \\\label{componentwiseSol2}
 Y_{2,t} &= \sum_{i=1}^\infty \Pi_{t,t+2-i}^{(2)} Q_{2,t+1-i}.
\end{align}
We start out by verifying that these representations are well-defined. The expression \eqref{componentwiseSol2} follows easily from \eqref{componentSRE2}, and the series converges absolutely almost surely (see e.g. proof of Theorem
2.1.3 in BDM \nocite{buraczewski:damek:mikosch:2016}). Turning to
\eqref{componentwiseSol1}, consider the SRE \eqref{componentSRE1}. Note
that the random element $Y_{2,t}$ is measurable w.r.t. the
$\sigma$-field generated by $(M_{t-s},Q_{t-s})_{s\in \Z_{-}}$ (see
Section 2.6 of Straumann, 2005\nocite{S}), and so is
$(M_{11,t},Q_{1,t})$. Thus $(M_{11,t},D_t)$ with
$D_t=M_{11,t}Y_{2,t-1}+Q_t$ is also measurable, where we notice that
component-wise measurability is equivalent to the measurability of a
vector. Then due to e.g. Proposition 4.3 of Krengel (2011)
\nocite{krengel:2011}, $(M_{11,t},D_t)_{t\in \Z}$ is a stationary and
ergodic sequence. Using that $E[\log|M_{11}|]<0$ and {$E[(\log |Y_{2,0}|)_+]<\infty$}, it follows by Theorem 1 of Brandt (1986) \nocite{brandt:1986} that \eqref{componentwiseSol1} is the unique stationary solution to \eqref{componentSRE1} and that the series converges absolutely almost surely. Since \eqref{bivSRE} has a unique solution, we conclude that the solution, $(Y_{1,t},Y_{2,t})$ 
of \eqref{componentwiseSol1} and \eqref{componentwiseSol2} is the component-wise series representation.  

% Since in the proof we frequently use \eqref{componentwiseSol1} and its
% decomposition, we justify their definability before. 
% . The first infinite
% series is well defined in the sense that 
% \begin{align*}
%  \E \Big(
% \sum_{i=1}^\infty \Pi^{(1)}_{t,t+2-i} Q_{2,t+1-i}
% \Big)^\varepsilon & \le \E \Big(
% \sum_{i=1}^\infty |\Pi_{0,2-1}^{(1)}|\big(
% |M_{12,1-i}|+|Q_{1,1-i}|
% \big)
% \Big)^\varepsilon \\
% & \le \sum_{i=1}^\infty
%  (\E|M_{11}|^\varepsilon)^{i-1}\E|M_{12}|^\varepsilon
%  \E|Y_2|^\varepsilon + \sum_{i=1}^\infty
%  (\E|M_{11}|^\varepsilon)^{i-1}\E|Q_1|^\varepsilon <\infty, 
% \end{align*}
% where $\E |M_{11}|^\varepsilon <1$ with $0<\varepsilon<1 \wedge \min
% \{\alpha_1,\alpha_2\}$.
% By direct substitution, we see that \eqref{componentwiseSol1} and
% \eqref{componentwiseSol2} satisfy \eqref{componentSRE1} and
% \eqref{componentSRE2}, so that SRE \eqref{mainSRE}.

% For the decomposition of \eqref{componentwiseSol1}, we see almost surely absolute convergence of the infinite series. In view of \eqref{componentwiseSol2}, the  Now since \eqref{componentwiseSol1} is the only stationary solution of \eqref{componentSRE1}, Theorem 1 of Brandt \cite{brandt:1986} implies the absolute convergence.

Next, we consider a decomposition of $Y_{1,0}$ in terms of the solutions to two other SREs. In particular, consider the SREs given by 
\begin{align}
\label{componentwiseSRE1}
\wh Y_{1,t} =  M_{11,t} \wh Y_{1,t-1}+Q_{1,t}, 
\end{align}
\begin{align}
\label{componentwiseSRE2}
\wt Y_{1,t} =  M_{11,t} \wt Y_{1,t-1}+\wt D_{t}, \quad \wt D_{t}=M_{12,t}Y_{2,t-1}. 
\end{align}
By the same reasoning as above, these SREs have unique solutions, respectively,
\begin{align}
\wh Y_{1,0} =  \sum_{i=1}^\infty \Pi_{0,2-i}^{(1)}Q_{1,1-i}, 
\end{align}
and
\begin{align}
\wt Y_{1,0} =  \sum_{i=1}^\infty \Pi_{0,2-i}^{(1)} M_{12,1-i} Y_{2,-i}, 
\end{align}
where the series converge absolutely almost surely. 
%Indeed, since the condition \eqref{stationarity-triangular} implies the stationarity of
%\eqref{componentwiseSRE1}, the solution satisfies
%\begin{align*}
% \wh Y_{1,0} = \sum_{i=1}^\infty \Pi_{0,2-i}^{(1)} Q_{1,1-i}\quad a.s.
%\end{align*}
%which converges absolutely in the sense that
%\[
% \E \left(
%\sum_{i=1}^\infty |\Pi_{0,2-i}^{(1)} Q_{1,1-i}|
%\right)^\varepsilon \le \sum_{i=1}^\infty (\E|M_{11}|^\varepsilon)^{i-1}
%\E|Q_1|^\varepsilon <\infty, 
%\]
%where $\E |M_{11}|^\varepsilon <1$ with $0<\varepsilon<1 \wedge \min
%\{\alpha_1,\alpha_2\}$. Moreover in view of \eqref{componentwiseSRE1},
%we have
%\begin{align*}
% \E \left(
% \sum_{i=1}^\infty |\Pi_{0,2-i}M_{12,1-i}Y_{2,-i}|
%\right)^\varepsilon \le \sum_{i=1}^\infty (\E|M_{11}|^\varepsilon)^{i-1} \E
% |M_{12}|^\varepsilon \E|Y_2|^\varepsilon <\infty.
%\end{align*}
Thus we have that
\begin{align}
\label{decomp1}
 Y_{1,0}= \wt Y_{1,0} + \wh Y_{1,0}.
\end{align}
\begin{proof}
 Throughout $c$ denotes a generic positive constant. \\
 {\bf (i) Case $\alpha_1>\alpha_2$.} Our strategy is that we further
 decompose $\wt Y_{1,0}$
 into several parts. By comparing their tail behaviors we specify the
 dominant term, which determines the tail behavior of $Y_1$. 
 First we show the general scheme. The detailed tail asymptotics
 of the dominant and negligible terms are given later. 
Without loss of generality, we consider the upper tail $\P(Y_1>x)$. Observe that in \eqref{decomp1}, $\wh Y_1$ is regularly varying with
 index $\alpha_1$, i.e. 
\[
 \P(\wh Y_{1,0} >x) \sim {\wh c_1} x^{-\alpha_1}
\]
and we turn to the tail properties of $\wt Y_1$. We
decompose $\wt Y_1$ into three parts,
\begin{equation}
 \wt Y_{1,0} = \Big( \underbrace{\sum_{i=1}^s}_{\wt Z_s} +\underbrace{\sum_{i=s+1}^\infty}_{\wt Y^s} \Big)\, \Pi_{0,2-i}^{(1)}
 M_{12,1-i}Y_{2,-i} =: \underbrace{\wt Y_{s,1} + \wt Y_{s,2}}_{\wt Z_s} + \wt
 Y^s, \label{decomp:wtX3}
\end{equation}
where in $\wt Z_s$ we apply the iteration of the SRE for $Y_{2,-i}$ until
 time $-s<-i$,
\[
 Y_{2,-i} = \Pi_{-i,1-s}^{(2)} Y_{2,-s}+ {\sum_{k=0}^{s-i-1}
 \Pi^{(2)}_{-1,1-i-k} Q_{2,-i-k}},
\]
and substitute this into $\wt Z_{s}$, so that
\begin{align}
\label{decomp:wtXunders}
 \wt Z_s
 &= \underbrace{\sum_{i=1}^s \Pi_{0,2-i}^{(1)}M_{12,1-i}
 \Pi_{-i,1-s}^{(2)}
 Y_{2,-s}}_{\wt Y_{s,1}} +
 \underbrace{\sum_{i=1}^s \Pi_{0,2-i}^{(1)} M_{12,1-i}
 \sum_{k=0}^{s-i-1}\Pi_{-i,1-i-k}^{(2)}Q_{2,-i-k}}_{\wt Y_{s,2}}.
\end{align}
The idea is then to study the tail behavior of each term in \eqref{decomp:wtX3}. Specifically, we later show that there are constants $C>0,\,0<q<1$ such that for every $s$
\begin{equation}
\label{geometric}
\P (|\wt Y^s|>x)\leq C q^sx^{-\a_2}.
\end{equation}
Moreover, for a fixed (but arbitrary) $s$
\begin{equation}\label{second}
\lim _{x\to \8}\P (|\wt Y_{s,2}|>x)x^{\a _2}=0
\end{equation}
\noindent and
\begin{equation}\label{first}
\lim _{x\to \8}\P (\wt Y_{s,1}>x)x^{\a _2}=c_2w_s,
\end{equation}
where $c_2$ is that in \eqref{tail:y2} and
\begin{align*}
 w_s %&= \E \Big(
%\sum_{i=1}^s \Pi_{0,2-i}^{(1)} \Pi_{-i,1-s}^{(2)} M_{12,-i}
%\Big)^{\a_2}_+ + \E \Big(
%\sum_{i=1}^s \Pi_{0,2-i}^{(1)} \Pi_{-i,1-s}^{(2)} M_{12,-i}
%\Big)^{\a_2}_- \\
&= \E \Big|
\sum_{i=1}^s \Pi_{0,2-i}^{(1)} \Pi_{-i,1-s}^{(2)} M_{12,-i}
\Big|^{\a_2}
\end{align*}
with
\begin{align}
\label{bounded}
 \sup_{s\in \N} w_s<\infty.
\end{align}
Hence we note that the term $\wt Y_{s,1}$ is the dominating term in \eqref{decomp:wtX3}. Now, using \eqref{decomp:wtX3}, we have that
\begin{align*}
 \P(Y_1>x) &\le \P(\wt Y_{s,1}>(1-3\varepsilon)x)+
\P(\wh Y_1>\varepsilon x) +
\P(\wt
 Y_{s,2}>\varepsilon x) +\P(\wt Y^s >\varepsilon x), \\
 \P(Y_1>x) &\ge \P(\wt Y_{s,1}>(1+3\varepsilon)x)-
\P(\wh Y_1<-\varepsilon x) -
\P(\wt
 Y_{s,2}<-\varepsilon x) -\P(\wt Y^s <-\varepsilon x). 
\end{align*}
Then after multiplying $x^{\alpha_2}$ to both sides of inequalities, we
 make the limit operation of $x\to\infty$ and obtain 
\begin{align}
\label{ineq:main}
 (1+3\varepsilon)^{-\alpha_2} c_2 w_s - C q^s &\le \liminf_{x\to\infty}
 x^{\alpha_2} \P(Y_1>x) \\
 &\le \limsup_{x\to\infty} x^{\alpha_2}\P(Y_1>x) \nonumber \\
 &\le    (1-3\varepsilon)^{-\alpha_2} c_2 w_s + C q^s. \nonumber
\end{align}
%Now in view of \eqref{ineq:main} 
In the upper and lower bounds, 
we take a converging subsequence
 $w_{s_k}$ of $w_s$ and then $\varepsilon \downarrow 0$. 
Due to \eqref{ineq:main} the limit for $k\to\infty$ satisfies 
\[
 c_2 \lim_{k\to \infty} w_{s_k} = \lim_{x\to \infty}
 x^{\alpha_2}\P(Y_1>x). 
\]
Since every converging subsequence converges to the same limit, we have
\[
 \lim_{x\to \infty} x^{\alpha_2} \P(Y_1>x) =c_2 \lim_{s\to\infty} w_s.
\]
It remains to prove \eqref{geometric}--\eqref{bounded}.
We begin with \eqref{bounded} and recall that $\E|M_{11}|^{\alpha_2}<1$. If $\alpha_2\le 1$ then
\[
 w_s \le \sum_{i=1}^\infty (\E|M_{11}|^{\alpha_2})^{(i-1)}
 \E|M_{12}|^{\alpha_2}<\infty
\]
and if $\alpha_2>1$ then
\[
 w_s^{1/\alpha_2} \le \sum_{i=1}^\infty (\E
 |M_{11}|^{\alpha_2})^{(i-1)/\alpha_2}
 (\E|M_{12}|^{\alpha_2})^{1/\alpha_2}<\infty.
\]
Since the bounds above do not depend on $s$, \eqref{bounded}
 follows.
Concerning \eqref{geometric} we use Markov inequality and conditioning in the
 following way
\begin{align*}
 x^{\alpha_2} \P(|\wt Y^s|>x) &= x^{\alpha_2} \P \big(
|\sum_{i=s+1}^\infty \Pi_{0,2-i}^{(1)} M_{12,1-i}Y_{2,-i}|>x
\big) \\
&\le x^{\alpha_2} \P\big(
\sum_{i=1}^\infty | \Pi_{0,2-(s+i)}^{(1)} M_{12,1-(s+i)}Y_{2,-(s+i)}| >x
\big) \\
& \le \sum_{i=1}^\infty x^{\alpha_2} \P\big(
|\Pi_{0,2-(s+i)}^{(1)} M_{12,1-(s+i)}Y_{2,-(s+i)}|> x\, i^{-\mu}\,/ \zeta(\mu)
\big) \\
&= \sum_{i=1}^\infty \E \big[ x^{\alpha_2}
\P\big(
G_i | Y_{2,-(s+i)} | >x \mid G_i
\big)
\big],
\end{align*}
where $G_i = \zeta(\mu)\,i^\mu\, |\Pi_{0,2-(i+s)}^{(1)}M_{12,1-(s+i)}|$ with $\mu>1$ and
 $\zeta(\cdot)$ is zeta function. The integrability above follows from
 the fact $\sum_{i=1}^\infty i^{-\mu}=\zeta(\mu)$. Notice that
 $G_i$ and $|Y_{2,-(i+s)}|$ are independent, $\E G_i^{\alpha}<\infty$
 and there is a constant $c$ such that for every $x>0$,
\[
 \P(|Y_{2,-(i+s)}|>x) \le c x^{-\alpha_2}.
\]
Hence it follows from 
\[
 \E\big[
x^{\alpha_2} \P\big(
G_i |Y_{2,-(s+i)}| >x \mid G_i
\big)
\big] \le c \E G_i^{\alpha_2}
\]
that 
\begin{align*}
 x^{\alpha_2} \P(|\wt Y^s|>x) &\le c \sum_{i=1}^\infty \E
 G_i^{\alpha_2} \\
 & = c \sum_{i=1}^\infty \E |\Pi_{0,2-(s+i)}|^{\alpha_2} \E
 |M_{12,1-(s+i)}|^{\alpha_2} \zeta(\mu)^{\alpha_2}\,i^{\alpha_2 \mu} \\
 & \le \underbrace{c \E|M_{12}|^{\alpha_2} \zeta(\mu) ^{\alpha_2}
 \sum_{i=1}^\infty (\E|M_{11}|^{\alpha_2})^{i+1} i^{\alpha_2\mu}}_{C}
 \,\cdot (\E|M_{11}|^{\alpha_2})^s, 
\end{align*}
where $\sum_{i=1}^\infty (\E|M_{11}|^{\alpha_2})^i i^{\alpha_2\mu}
 <\infty$ since $\E|M_{11}|^{\alpha_2}<1$. Putting $q=\E
 |M_{11}|^{\alpha_2}$, we obtain \eqref{geometric}.
We prove \eqref{second} by showing that $\E|\wt
 Y_{s,2}|^{\alpha_2}<\infty$ for any fixed $s$. 
 We work on the expression in \eqref{decomp:wtXunders}. 
Recall that $(M_t,Q_t)$ are i.i.d. so that $\Pi_{0,2-i}^{(1)}\, M_{12,1-i}$
 and $\sum_{k=0}^{s-i-1}\Pi_{-i,1-i-k}^{(2)}
 Q_{2,-i-k},\,i=1,2,\ldots,s$ are independent. We further recall that 
 $\E|M_{22}|^{\alpha_2}=1,\,\E|M_{11}|^{\alpha_2}<1,\,\E|M_{12}|^{\alpha_2}<\infty$
 and $\E|Q_2|^{\alpha_i}<\infty$. For $\alpha_2>1$, by Minkowski's inequality,
 \begin{align*}
  \E |\wt Y_{s,2}|^{\alpha_2} &= \E \big|
 \sum_{i=1}^s \Pi_{0,2-i}^{(1)} M_{12,-i} \sum_{k=0}^{s-i-1}
  \Pi_{-i,1-i-k}^{(2)} Q_{2,-i-k}
\big|^{\alpha_2} \\
 &\le \Big[
 \sum_{i=1}^s \Big\{
 (\E |M_{11}|^{\alpha_2})^{i+1} \E |M_{12}|^{\alpha_2} \E
  |Q_2|^{\alpha_2} (s-i)^{\alpha_2}
\Big\}^{1/\alpha_2}
\Big]^{\alpha_2}<\infty
 \end{align*}
and for $\alpha_2 \le 1$, by sub-additivity,
\begin{align*}
 {\E |\wt Y_{s,2}|^{\alpha_2}} &\le \sum_{i=1}^s \E \big|
 \Pi_{0,2-i}^{(1)} M_{12,-i} \sum_{k=0}^{s-i-1}\Pi_{-i,1-i-k}^{(2)} Q_{s,-i-k}
\big|^{\alpha_2} \\
& \le \sum_{i=1}^s (\E |M_{11}|^{\alpha_2})^{i+1} \E |M_{12}|^{\alpha_2}
 \sum_{k=0}^{s-i-1} (\E |M_{22}|^{\alpha_2})^k \E|Q_2|^{\alpha_2} \\
&= \E |M_{12}|^{\alpha_2} \E |Q_2|^{\alpha_2} \sum_{i=1}^s (\E
 |M_{11}|^{\alpha_2})^{i+1}(s-i)<\infty.
\end{align*}
Hence for fixed $s$ we have \eqref{second}. Finally we observe
\[
 \wt Y_{s,1} = R_s Y_{2,-s},
\]
where $R_s:= \sum_{i=1}^s \Pi_{0,2-i}^{(1)}M_{12,-i} \Pi_{-i,1-s}^{(2)}$
 and $Y_{2,-s}$ are independent. Hence Breiman's lemma (Lemma \ref{lem:Breiman}) yields
\[
 \lim_{x\to\infty}x^{\alpha_2}\P(\wt Y_{s,1}>x)= c_2 w_s,
\]
which is \eqref{first}. This finishes the first part of the proof. \\

\noindent
{\bf (ii) Case $\alpha_1<\alpha_2$.} By stationarity we have from SRE \eqref{componentSRE1} that
\[
 Y_{1,0} = D_0+ M_{11,0} Y_{1,-1}, 
\]
where $Y_{1,-1}$ has the same law as $Y_{1,0}$ and independent of
 $M_{11,0}$. {We apply} Theorem 2.3 Case 2 of
 Goldie (1991)\nocite{goldie:1991} that states that if
\begin{align*}
 I_+ =\int_0^\infty \big|
\P(Y_{1,-1}>x)-\P(M_{11,0}Y_{1,-1}>x)
\big| x^{\alpha_1-1} dx <\infty
\end{align*}
and
\begin{align*}
 I_- = \int_0^\infty \big|
\P(Y_{1,-1}<-x)-\P(M_{11,0}Y_{1,-1}<-x)
\big| x^{\alpha_1-1} dx <\infty,
\end{align*}
then
\begin{align}
& \lim_{x\to\infty} \P(Y_{1,0}>x)x^{\alpha_1} =
 \lim_{x\to\infty}\P(Y_{1,0}<-x) x^{\alpha_1} \nonumber \\ %:= \ov c_1.
&\qquad =\frac{1}{2m_1} \int_0^\infty
 (\P(|Y_1|>x)-\P(|M_{11}Y_1|>x))x^{\alpha_1-1} dx. 
\label{eq:goldie:case2}
\end{align}
Due to Goldie (1991, Lemma 9.4)\nocite{goldie:1991}, if $I_+,\,I_-<\infty$ then
 the right-hand side in \eqref{eq:goldie:case2} equals $\ov c_1$, given in Theorem \ref{thm:triangular}, where we notice that 
 $|x|^{\alpha_1}=x_+^{\alpha_1}+x_{-}^{\alpha_1}$ for any $x\in \R$.
 We focus on showing that $I_+<\infty$ since the proof of $I_-<\infty$ follows by similar arguments replacing $Y_{1,-1}$ by $(-Y_{1,-1})$ in $I_+$. 
In view of Goldie (1991, Lemma 9.4)\nocite{goldie:1991}, we have that
\begin{align*}
 I_+ &= \int_0^\infty \big|
\P(Y_{1,-1}>x)-\P(M_{11,0}Y_{1,-1}>x)
\big| x^{\alpha_1-1} dx \\
&= \int_0^\infty \big|
\P(D_0+M_{11,0}Y_{1,-1}>x)-\P(M_{11,0}Y_{1,-1}>x)
\big| x^{\alpha_1-1} dx \\
&= \frac{1}{\alpha_1}\E \big[ \big|
(D_0+M_{11,0}Y_{1,-1})_+^{\alpha_1} - (M_{11,0}Y_{1,-1})_+^{\alpha_1}
\big| \big],
\end{align*}
which holds regardless of whether $I_+$ is finite or infinite.
By elementary inequalities we observe that for $\alpha_1 \le 1$,
\[
 \big|
(D_0+M_{11,0}Y_{1,-1})_+^{\alpha_1} - (M_{11,0}Y_{1,-1})_+^{\alpha_1}
\big| \le |D_0|^{\alpha_1}.
\]
Using that $\alpha_1 < \alpha_2$, we have that $E[|D_0|^{\alpha_1}]<\infty$such that $I_+<\infty$. It remains to consider the case $\alpha_1>1$, where we note that
\begin{align*}
&\big|
(D_0+M_{11,0}Y_{1,-1})_+-(M_{11,0}Y_{1,-1})_+
\big|^{\alpha_1} \\
&\quad \le
 \big|
(D_0+M_{11,0}Y_{1,-1})_+^{\alpha_1} - (M_{11,0}Y_{1,-1})_+^{\alpha_1}
\big| \\
%&\le \alpha_1
% 2^{\alpha_1-2}(|D_0|^{\alpha_1}+|M_{11,0}Y_{1,-1}|^{\alpha_1-1}|D_0|).
&\quad \le \alpha_1 (D_0+M_{11,0}Y_{1,-1})_+^{\alpha_1-1} \big\{
(D_0+M_{11,0}Y_{1,-1})_+-(M_{11,0}Y_{1,-1})_+
\big\} I_{\{D_0>0\}} \\
&\qquad +\alpha_1 (M_{11,0}Y_{1,-1})_+^{\alpha_1-1} \big\{
(M_{11,0}Y_{1,-1})_+ -
(D_0+M_{11,0}Y_{1,-1})_+
\big\} I_{\{D_0<0\}} \\
& \quad \le \alpha_1  (|D_0|+|M_{11,0}Y_{1,-1}|)^{\alpha_1-1} |D_0|. 
\end{align*}
Thus we have
\[
 I_+ \le c \big(
\E|D_0|^{\alpha_1}+\E|M_{11,0}Y_{1,-1}|^{\alpha_1-1}|D_0|
\big), 
\]
where we use Minkowski's inequality and sub-additivity of concave
 functions depending on whether $\alpha_1>2$ or $1<\alpha_1 \le 2$. We
 need to prove that $\E[|M_{11,0} Y_{1,-1}|^{\alpha_1-1}|D_0|]<\infty$. Note that
\begin{align*}
& \E[|M_{11,0} Y_{1,-1}|^{\alpha_1-1}|D_0|] \\
& \le \E[|M_{11,0} Y_{1,-1}|^{\alpha_1-1}(|Q_{1,0}|+|M_{12,0}Y_{2,-1}|)] \\
& \le \E[|M_{11,0}|^{\alpha_1-1}|Q_{1,0}|] \E [|Y_{1,-1}|^{\alpha_1-1}] + 
\E[|M_{11,0}|^{\alpha_1-1} |M_{12,0}|]\, \E[|Y_{1,-1}|^{\alpha_1-1}|Y_{2,-1}|].
\end{align*}
By H\"older's inequality $\E|M_{11,0}|^{\alpha_1-1}|Q_{1,0}|$ and
 $\E|M_{11,0}|^{\alpha_1-1}|M_{12,0}|$ are finite, since all quantities
 included have finite moments of any (finite) order. We now show that $\E|Y_{1,-1}|^{\alpha_1-1}|Y_{2,-1}|$ is finite, and note that $\E |Y_{1,-1}|^{\alpha_1-1}<\infty$ follows by a similar argument.
Choose some small $\varepsilon >0$ such that $p:=(\alpha_1 - \varepsilon)/(\alpha_1-1)>0$ and $q:=p/(p-1)<\alpha_2$. By H\"older's inequality,
\[
 \E  |Y_{1,0}|^{\alpha_1-1}|Y_{2,0}|  \le \big(
\E |Y_{1,0}|^{p(\alpha_1-1)}
\big)^{1/p} \big(
\E |Y_{2,0}|^{q} 
\big)^{1/q}.
\]
With $\beta:=\alpha_1 - \varepsilon >0$ by applying Minkowski's inequality to $Y_{1,0}=\sum_{i=0}^\infty
 \Pi^{(1)}_{0,1-i}D_{-i}$, we obtain 
%Then since
% by convexity $\E|M_{11,0}|^{\beta}<\infty$ and since
% $\E|Y_{1,0}|^\beta=\E|\sum_{i=1}^\infty \Pi^{(1)}_{0,1-i}
% D_{-i}|^\beta$, an application of Minkowski inequality yields
\[
\big(\E |Y_{1,0}|^\beta \big)^{1/\beta} \le  \sum_{i=0}^\infty (\E
 |\Pi_{0,1-i}^{(1)} D_{-i}|^\beta )^{1/\beta} =
 \sum_{i=0}^\infty \big( \E|M_{11,0}|^{\beta}
\big)^{i/\beta} \big(
\E |D_0|^\beta 
\big)^{1/\beta} <\infty,
\]
since $\E|M_{11,0}|^{\beta}<1$ by convexity and $\E |D_0|^\beta <\infty$. We conclude that $I_+<\infty$ for $\alpha_1>1$. This finishes the proof. 
\end{proof}

\appendix

\newpage

{%\small
}

\end{document}